\newtheorem{theorem}{Theorem}[section]
\newtheorem{lemma}[theorem]{Lemma}
\newtheorem{proposition}[theorem]{Proposition}
\newtheorem{corollary}[theorem]{Corollary}
\theoremstyle{definition}
\theoremstyle{definitions}
\newtheorem{definition}[theorem]{Definition}
\newtheorem{remark}[theorem]{Remark}
\newtheorem{example}[theorem]{Example}
\theoremstyle{notations}
\theoremstyle{remarks}
\journal{ }
\begin{document}

\begin{frontmatter}



\title{On Generalized Covering Groups of Topological Groups}


\author[]{Hamid~Torabi\corref{cor1}}
\ead{h.torabi@ferdowsi.um.ac.ir}
\author[]{Mehdi~Abdullahi~Rashid}
\ead{mbinev@mail.um.ac.ir}
\author[]{Majid~Kowkabi}
\ead{m.kowkabi@stu.um.ac.ir}

\address{Department of Pure Mathematics, Ferdowsi University of Mashhad,\\
P.O.Box 1159-91775, Mashhad, Iran.}
\cortext[cor1]{Corresponding author}

\begin{abstract}
   It is well-known that a homomorphism $ p : \widetilde{G} \rightarrow G $ between topological groups is a covering homomorphism if and only if  $ p $ is an open epimorphism with discrete kernel. In this paper we generalize this fact, in precisely, we show that for a connected locally path connected topological group $ G $, a continuous map $ p : \widetilde{G} \rightarrow G $  is a generalized covering if and only if $ \widetilde{G}  $ is a topological group and $ p $ is an open epimorphism with  prodiscrete (i.e, product of discrete groups) kernel. To do this we first show that if $ G $ is a topological group and $ H $ is any generalized covering subgroup of $ \pi_1(G,e) $, then $ H $ is as intersection of all covering subgroups, which contain $ H $. Finally, we show that every generalized covering of a connected locally path connected topological group is a fibration.
\end{abstract}

\begin{keyword}
Topological group\sep Fundamental group\sep Generalized Covering map.
\MSC[2010]{57M10, 57M12, 57M05}

\end{keyword}

\end{frontmatter}

\section{Introduction}

  Chevalley \cite{Chev} introduced a covering group theory for connected, locally path connected, and semi locally simply connected topological groups. Rotman \cite[Theorem 10.42]{Rotm} proved that for every covering space $(\widetilde{X}, p)$ of a connected, locally path connected, and semi locally simply connected topological group $G$, $\widetilde{X}$ is a topological group and $p$ is a homomorphism. Recently, Torabi \cite{Torabi} developed this theory for connected locally path connected topological groups and gave a classification for covering groups of them. He showed that the natural structure, which has been called the path space, and its relative endpoint projection map for a connected locally path connected topological group are a topological group and a group homomorphism, respectively. Recall that for an arbitrary subgroup $ H $ of the fundamental group $ \pi_1(X,x_0) $, the path space is the set of all paths starting at $ x_0 $, which is denoted by $ P(X,x_0) $,  with an equivalence relation $\sim_H$ as follows:  
  
 ${\alpha }_1\sim_H {\alpha }_2$ if and only if ${\alpha }_1(1)={\alpha }_2(1)$ and $[{\alpha }_1*{{\alpha }_2}^{-1}]\in H$.
The equivalence class of $\alpha$ is denoted by ${\left\langle \alpha \right\rangle }_H$.
One can consider the quotient space $\widetilde{X}_H=P(X,x_0)/\sim_H$ and the endpoint projection map $p_H: (\widetilde{X}_H,e_H)\rightarrow (X,x_0)$ defined by ${\left\langle \alpha \right\rangle }_H \mapsto \alpha (1)$, where $e_H$ is the class of the constant path at $x_0$.

If $\alpha \in P(X,x_0)$ and $U$ is an open neighbourhood of $\alpha (1)$, then a continuation of $\alpha$ in $U$ is a path $\beta=\alpha *\gamma $, where $\gamma $ is a path in $U$ with $\gamma (0)=\alpha (1)$. Put $ N({\left\langle \alpha \right\rangle }_H,U) =\{{\left\langle \beta \right\rangle }_H\in {\widetilde{X}}_H \ | \ \mathrm{\beta\ is\ a\ continuation\ of\ \alpha\ in\ U}\}$. It is well known that the subsets $ N({\left\langle \alpha \right\rangle }_H, U) $ form a basis for a topology on ${\widetilde{X}}_H$ for which the function $p_H:{(\widetilde{X}}_H,e_H)\rightarrow (X,x_0)$ is continuous (see \cite[Page 82]{Span}). Brodskiy et al. \cite{BroU} called this topology on ${\widetilde{X}}_H$ the whisker topology.
  
  Moreover, they were interested in studying the spaces whose local properties can extend to the entire space and introduced the notions of $ SLT $ and strong $ SLT $ spaces.
  
  \begin{definition} \cite[Definition 4.18]{BroU}
  A topological space $X$ is called \textit{strong small loop transfer space (strong $ SLT $ space for short) at $x_{0}$} if for each point $x \in X$  and for every neighbourhood $U$ of $x_{0}$, there is a neighbourhood $V$ of $x$ such that for every path $ \alpha: I \rightarrow X $ from $ x_0 $ to $ x $ and every loop $\beta$ in $V$ based at $x$ there is a loop $\gamma$ in $U$ based at $x_{0}$  which is homotopic to $ \alpha\ast\beta\ast\alpha^{-1}$ relative $\dot{I}$. The space $X$ is called strong $ SLT $ space, if $X$ is strong $ SLT $ space at $x$ for every $x \in {X}$.
 \end{definition}
 
 On the other hand, Torabi   showed  that every topological group is a strong $ SLT $ at the identity element \cite[Theorem 2.4]{Torabi}. 
  In Section \ref{sec3}, we introduce the generalized covering group of a topological group and give some examples to clarify the difference between covering and generalized covering groups (Examples \ref{ex3.12} and \ref{ex313}). Moreover, we show that for a connected locally path connected strong $ SLT $ at $ x_0 $ space, every generalized covering subgroup $ H $ of the fundamental group can be written as the intersection of all covering subgroups, which contains $ H $ and vice versa. 
  
   In Section \ref{se4}, we attempt to provide a method for classifying generalized covering groups of a topological group by studying the topology of the kernel of the relative generalized covering homomorphism. Of cores, we extend the well-known result about covering groups (Remark \ref{reNew}) for generalized covering groups and show that if $ G $ is a connected locally path connected topological group, $ (\widetilde{G},p) $ is a generalized covering group of $ G $ if and only if $ p $ is an open epimorphism with prodiscrete (i.e, product of discrete groups) kernel (Corollary \ref{co310}). In this regard, we first show that  the central fibre of a generalized covering map of an arbitrary topological space is totally path disconnected ( Proposition \ref{pr37}). Counterexample \ref{ex315} show that it is not a sufficient condition for generalized covering subgroups, even in the case of topological groups. After that we present our desirable definition of prodiscrete subgroups (Definition \ref{de43}) and show that for a connected locally path connected topological group $ G $ and a generalized covering subgroup $ H \leq \pi_1(G,e) $, the kernel of the endpoint projection homomorphism is a prodiscrete subgroup ( Theorem \ref{th43}). 
   
 Berestovskii et al. in \cite{Beres} provided a new definition by extending the concept of a cover of a topological group $ G $ such as the pair $ (\widetilde{G},p) $, where $ \widetilde{G} $ is a topological group and the homomorphism $ p : \widetilde{G} \rightarrow G$ is an open epimorphism with prodiscrete kernel. Note that the meaning of prodiscrete kernel in the sense of them was as the inverse limit of discrete groups. In this paper we use Definition \ref{de43} for a prodiscrete concept and show that if $ H $ is a prodiscrete normal subgroup of topological group $ G $, then the pair $ (G,\varphi_H) $ is a generalized covering group of $ \frac{G}{H} $ (Theorem \ref{th44}). Using this theorem, we conclude the main result in Corollary \ref{co310}. Finally, we show that in the case of topological groups the concepts of rigid covering fibrations (which was firstly introduced by Biss \cite{Biss}) and generalized covering groups are coincide and conclude that every generalized covering group of a topological group is also a fibration.


\section{Generalized Coverings of Topological Groups} \label{sec3}

It is well-known that a continuous map $p:(\widetilde{X},{\tilde{x}}_0)\rightarrow (X,x_0)$ has the \textit{unique lifting property},  if for every connected, locally path connected space$\ (Y,y_0)$ and every continuous map $f:(Y,y_0)\rightarrow (X,x_0)$ with
$f_*{\pi }_1(Y,y_0)\subseteq p_*{\pi }_1(\widetilde{X},{\tilde{x}}_0)$ for ${\tilde{x}}_0\in p^{-1}(x_0)$, there exists a unique continuous map $\tilde{f}:(Y,y_0)\rightarrow (\widetilde{X},{\tilde{x}}_0)$ with $p\circ\tilde{f}=f$. If $\widetilde{X}$ is a connected, locally path connected space and $p:\widetilde{X}\rightarrow X$ has unique lifting property, then $p$ and $\widetilde{X}$ are called a generalized covering map and a generalized covering space for $X$, respectively. 
\begin{definition}
Let $ G $ be a topological group. By a generalized covering group of $ G $, a pair $ (\widetilde{G},p) $ is composed of a topological group $ \widetilde{G} $ and  a homomorphism $ p:(\widetilde{G},\tilde{e})  \rightarrow (G,e) $ such that $ (\widetilde{G},p) $ is a generalized covering space of $ G $. 
\end{definition}

It is easy to check that for an arbitrary pointed topological space $ (X,x_0) $ if $ p:(\widetilde{X},\tilde{x}_0)  \rightarrow (X,x_0) $ is a generalized covering space, then the induced map $ p_* : \pi_1 (\widetilde{X},\tilde{x}_0) \rightarrow \pi_1(X,x_0) $ is  one to one. Therefore, the image  $ H=p_*\pi_1 (\widetilde{X},\tilde{x}_0) $ is a subgroup of  $ \pi_1(X,x_0) $, which is called \textit{generalized covering subgroup}. 

\begin{remark} \label{reNew}
There is a well-known result about covering groups of topological groups which we will extend it for generalized covering groups. A homomorphism $ p : \widetilde{G} \rightarrow G $ between topological groups is a covering homomorphism if and only if  $ p $ is an open epimorphism with discrete kernel.
\end{remark}

As mentioned in \cite{paper1} and \cite{FiZa}, the endpoint projection map $ p_H : \widetilde{X}_H \rightarrow X $ is surjective and open if $ G $ is path connected and locally path connected, respectively. On the other hand, for an arbitrary pointed topological space $ (X,x_0) $, Brazas \cite[Lemma 5.10]{BrazG} showed the relationship between the image of a generalized covering map and the space $ \widetilde{X}_H $ as follows:

\begin{lemma} \label{le33}
Suppose that $\hat{p}:(\hat{X},\hat{x})\rightarrow (X,x_0)$ has the unique lifting property with ${\hat{p}}_*({\pi }_1(\hat{X},\hat{x}))=H$. Then there is a homeomorphism $h:(\hat{X},\hat{x})\rightarrow ({\widetilde{X}}_H,e_H)$ such that $p_H\circ h=\hat{p}$.
\end{lemma}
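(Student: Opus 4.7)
The plan is to construct mutually inverse continuous maps $h:\hat{X}\to\widetilde{X}_H$ and $k:\widetilde{X}_H\to\hat{X}$ using paths and the unique lifting property, invoking the hypothesis $\hat{p}_*\pi_1(\hat{X},\hat{x})=H$ at every step. For $h$, one defines $h(\hat{y})=\langle\hat{p}\circ\hat{\alpha}\rangle_H$ for any path $\hat{\alpha}$ in $\hat{X}$ from $\hat{x}$ to $\hat{y}$ (available because $\hat{X}$ is path connected); two such paths differ by a loop at $\hat{x}$ whose $\hat{p}$-image represents a class in $\hat{p}_*\pi_1(\hat{X},\hat{x})=H$, so $h$ is well defined, and $p_H\circ h=\hat{p}$ holds by inspection.

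For the inverse $k$, first note that applying the unique lifting property to the simply connected domain $I$ produces a unique lift $\hat{\alpha}$ at $\hat{x}$ of every path $\alpha$ in $X$ based at $x_0$, so one may set $k(\langle\alpha\rangle_H)=\hat{\alpha}(1)$. The main obstacle is the well-definedness of $k$: if $\alpha_1\sim_H\alpha_2$, then $[\alpha_1*\alpha_2^{-1}]\in H$, hence there is a loop $\hat{\eta}$ at $\hat{x}$ with $\hat{p}\circ\hat{\eta}$ path-homotopic to $\alpha_1*\alpha_2^{-1}$. Applying the unique lifting property a second time, now to the simply connected square $I\times I$ (whose trivial fundamental group makes the $\pi_1$-hypothesis vacuous), lifts this path homotopy and forces the lifts of $\alpha_1$ and of $(\hat{p}\hat{\eta})*\alpha_2$ at $\hat{x}$ to share their endpoint; since the latter lifts as $\hat{\eta}*\hat{\alpha}_2$, one obtains $\hat{\alpha}_1(1)=\hat{\alpha}_2(1)$. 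That $h$ and $k$ are mutually inverse is then immediate from the two constructions.

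Finally, continuity follows from the local path connectedness of $\hat{X}$ together with the whisker basis on $\widetilde{X}_H$. For $h$, given a basic neighbourhood $N(\langle\alpha\rangle_H,U)$ of $h(\hat{y})$, choose a path connected open $V\ni\hat{y}$ with $\hat{p}(V)\subseteq U$; a path in $V$ from $\hat{y}$ to any $\hat{z}\in V$ projects to a continuation of $\alpha$ in $U$, so $h(V)\subseteq N(\langle\alpha\rangle_H,U)$. For $k$, one checks that $k(N(\langle\alpha\rangle_H,U))$ coincides with the set $A$ of points $\hat{z}\in\hat{X}$ reachable from $\hat{\alpha}(1)$ by a path whose $\hat{p}$-image lies in $U$; openness of $A$ again follows from local path connectedness, because around any $\hat{z}\in A$ one can find a path connected open neighbourhood inside $\hat{p}^{-1}(U)$, and concatenation with the defining path of $\hat{z}$ keeps every new point in $A$. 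This yields the homeomorphism $h$ with $p_H\circ h=\hat{p}$.
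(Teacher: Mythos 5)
The paper itself offers no proof of this lemma; it is imported verbatim from Brazas (Lemma 5.10 of \cite{BrazG}), so your argument has to stand on its own. Up to and including the continuity of $h$ it does: the definition $h(\hat{y})=\langle\hat{p}\circ\hat{\alpha}\rangle_H$, its well-definedness from $\hat{p}_*\pi_1(\hat{X},\hat{x})=H$, the description of the inverse $k$ via unique lifts of paths, the well-definedness of $k$ obtained by lifting a path homotopy over the simply connected square and reading off the edges, the verification that $h$ and $k$ are mutually inverse, and the neighbourhood argument giving $h(V)\subseteq N(h(\hat{y}),U)$ are all correct and are the standard route.

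The gap is in the final step. Since $k=h^{-1}$, proving that $k(N(\langle\alpha\rangle_H,U))$ is open in $\hat{X}$ proves that $k$ is an \emph{open} map, and for the bijection $k=h^{-1}$ that is literally the statement that $h$ is continuous --- the fact you had already established in the preceding sentences. What is left unproved is the continuity of $k$ itself, equivalently the openness of $h$: for every open $V$ containing $\hat{\alpha}(1)$ one must produce an open $U\ni\alpha(1)$ with $k(N(\langle\alpha\rangle_H,U))\subseteq V$, which by your own identification of $k(N(\langle\alpha\rangle_H,U))$ with the set $A$ means forcing the path component of $\hat{\alpha}(1)$ in $\hat{p}^{-1}(U)$ inside $V$ by shrinking $U$. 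This does not follow from local path connectedness together with unique lifting of paths and homotopies alone; with only those inputs $h$ is a continuous bijection that need not be a homeomorphism, and recovering the whisker topology is precisely the content of the lemma. The standard repair (Brazas, Fischer--Zastrow) is to invoke the unique lifting property of $\hat{p}$ for a test space other than $I$ and $I\times I$, namely $Y=\widetilde{X}_H$ itself: one first verifies $(p_H)_*\pi_1(\widetilde{X}_H,e_H)\subseteq H$ (which also requires an argument), then obtains a continuous lift of $p_H$ through $\hat{p}$, and identifies that lift with $k$ by uniqueness of path lifts. As written, your proof establishes only that $h$ is a continuous bijection satisfying $p_H\circ h=\hat{p}$.
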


\begin{remark} \label{re}
It has been indicated in \cite[proof of Theorem 3.6]{Torabi} that for every subgroup $ H \leq \pi_1(G,e_G) $ of the fundamental group of a topological group $ G $, one can construct a multiplication on $ \widetilde{G}_H $, which makes $ \widetilde{G}_H $ a topological group and the continuous map $ p_H: \widetilde{G}_H \rightarrow G $ a homomorphism. 
\end{remark}

The following proposition can be obtained by using the above remark and lemma.

\begin{proposition} \label{pr34}
If  $ p:(\widetilde{G},\tilde{e})  \rightarrow (G,e) $ is a continuous map with unique lifting property, $ G$ and $ \widetilde{G} $ are tow topological groups, and $ \widetilde{G} $ is connected locally path connected, then $ (\widetilde{G},p) $ is a generalized covering group.
\end{proposition}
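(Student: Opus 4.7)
The plan is to feed the hypotheses straight into Lemma~\ref{le33} and Remark~\ref{re} and then repackage the conclusion. First I would set $H := p_{*}\pi_{1}(\widetilde{G},\tilde{e})$. Since $p$ is a continuous pointed map with the unique lifting property and $\widetilde{G}$ is connected and locally path connected, Lemma~\ref{le33} applies and produces a pointed homeomorphism $h:(\widetilde{G},\tilde{e}) \to (\widetilde{G}_{H},e_{H})$ satisfying $p_{H}\circ h = p$.

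Next I would invoke Remark~\ref{re}: because $G$ is a topological group, the path space $\widetilde{G}_{H}$ admits a continuous multiplication---coming from pointwise multiplication of representative paths in $G$---that makes it a topological group for which $p_{H}$ is a continuous group homomorphism. Transporting this multiplication along $h$ endows $\widetilde{G}$ with a topological group structure (the topology is unchanged, because $h$ is a homeomorphism), under which $h$ is tautologically a topological group isomorphism. Then $p = p_{H}\circ h$ is the composite of a topological group isomorphism and a continuous homomorphism, hence is itself a continuous homomorphism. Combined with the unique lifting property and the connected, locally path connected hypothesis on $\widetilde{G}$, this is precisely the definition of a generalized covering group, so $(\widetilde{G},p)$ qualifies.

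The only delicate point I foresee is reconciling the topological group structure that the statement already assumes on $\widetilde{G}$ with the one produced by transport along $h$: one must either identify the two structures, or read the conclusion as asserting that $\widetilde{G}$ carries \emph{some} topological group structure of this kind. Once that interpretive choice is made, the proof is essentially bookkeeping on top of Lemma~\ref{le33} and Remark~\ref{re}, and I would not anticipate any substantive technical difficulty beyond what those two results already supply.
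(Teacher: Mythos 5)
Your proof follows the paper's argument exactly: apply Lemma~\ref{le33} to obtain the homeomorphism $h$ with $p_H\circ h=p$, invoke Remark~\ref{re} to make $\widetilde{G}_H$ a topological group with $p_H$ a homomorphism, and transport that structure along $h$ so that $p=p_H\circ h$ becomes a homomorphism. The ``delicate point'' you flag---reconciling the transported group structure with the one hypothesized on $\widetilde{G}$---is precisely the step the paper leaves implicit (it simply writes $h(g_1g_2)=h(g_1)h(g_2)$ without justification), so your reading of the conclusion as asserting that $\widetilde{G}$ carries such a compatible structure is the one the paper intends.
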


\begin{proof}
If $ p:(\widetilde{G},\tilde{e})  \rightarrow (G,e) $ has unique lifting property, then by the above lemma, there is a homeomorphism $ h: (\widetilde{G},\tilde{e}) \rightarrow (\widetilde{G}_H,e_{\widetilde{G}_H }) $ such that $ p_H \circ h = p $.  As it was discussed in \cite[proof of Theorem 3.6]{Torabi}, $ \widetilde{G}_H $ and so $ \widetilde{G} $ are topological groups. Moreover, Since the map $ p_H $ is a group homomorphism, then for $ g_1, g_2 \in  \widetilde{G}$, 
$ p(g_1g_2)=p_H \circ h (g_1g_2) =  p_H (h (g_1g_2)) =  p_H(h(g_1)h(g_2)) = p_H(h(g_1))p_H(h(g_2)) = p_H \circ h (g_1)p_H \circ h (g_2)= p(g_1)p(g_2) $. Therefore, $ p $ is a homomorphism.
\end{proof}

\begin{corollary} \label{co35}
Let $ G $ be a topological group, and let $ p:(\widetilde{G},\tilde{e})  \rightarrow (G,e) $. Then  $ (\widetilde{G},p) $ is a generalized covering group if and only if it is a generalized covering space, or equivalently, $ p $ is a surjection with unique lifting property.
\end{corollary}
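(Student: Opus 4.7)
The plan is to deduce the corollary directly from Lemma \ref{le33} and Remark \ref{re}, essentially extracting from the argument of Proposition \ref{pr34} the observation that the topological group structure on $\widetilde{G}$ need not be assumed: it is automatically present whenever $(\widetilde{G},p)$ is a generalized covering space of a topological group $G$. The implication ``generalized covering group $\Rightarrow$ generalized covering space'' is immediate from the definition, so the content is in the converse.

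For the converse, I would start by assuming that $(\widetilde{G},p)$ is a generalized covering space, so that $\widetilde{G}$ is connected and locally path connected and $p$ has the unique lifting property. Setting $H := p_*\pi_1(\widetilde{G},\tilde{e})$, Lemma \ref{le33} produces a homeomorphism $h:(\widetilde{G},\tilde{e})\to (\widetilde{G}_H,e_H)$ with $p_H\circ h = p$. By Remark \ref{re}, the space $\widetilde{G}_H$ already carries a topological group structure for which $p_H:\widetilde{G}_H\to G$ is a continuous homomorphism. I would then transport this structure along $h$, defining
\[
g_1\cdot g_2 \; := \; h^{-1}\!\bigl(h(g_1)\,h(g_2)\bigr), \qquad g^{-1} \; := \; h^{-1}\!\bigl(h(g)^{-1}\bigr),
\]
so that $h$ becomes an isomorphism of groups. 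Since $h$ and $h^{-1}$ are continuous and the operations on $\widetilde{G}_H$ are continuous, the transferred multiplication and inversion on $\widetilde{G}$ are continuous, making $\widetilde{G}$ a topological group. Then $p = p_H\circ h$ is a composition of continuous homomorphisms, hence is itself a continuous homomorphism, and $(\widetilde{G},p)$ is a generalized covering group.

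For the final ``or equivalently'' clause, I would observe that the unique lifting property already forces $p_*$ to be injective, while surjectivity of $p$ together with the homeomorphism from Lemma \ref{le33} identifies $\widetilde{G}$ with $\widetilde{G}_H$; since $G$ is (in the relevant applications) connected and locally path connected, the whisker space $\widetilde{G}_H$ inherits these properties from $G$, so $\widetilde{G}$ is connected and locally path connected. Conversely, a generalized covering space is surjective onto the path component of $e$, which is all of $G$ when $G$ is connected.

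There is no serious obstacle here: once Lemma \ref{le33} is invoked, every piece of structure on $\widetilde{G}$ is pulled back from $\widetilde{G}_H$ through the homeomorphism $h$. The only point that requires any care is checking that the transported operations are continuous, and this is automatic from $h$ being a homeomorphism; no new topology on $\widetilde{G}$ has to be constructed.
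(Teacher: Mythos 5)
Your argument is correct and is essentially the paper's own route: the paper disposes of this corollary with ``immediate from the definition,'' leaning on the machinery already set up in Lemma \ref{le33}, Remark \ref{re} and the proof of Proposition \ref{pr34}, and your transport of the group structure along $h$ is exactly what that proof does implicitly (indeed, making explicit that $h$ must be taken as a group isomorphism is a point the paper glosses over). The only loose end, inherited from the corollary's own statement rather than introduced by you, is the aside about $G$ being connected and locally path connected ``in the relevant applications'' for the final equivalence; otherwise nothing is missing.
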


\begin{proof}
It is the immediate result of the definition.
\end{proof}



Clearly, in the case of semilocally simply connected spaces the category of covering and generalized covering spaces are equivalent. Out of semilocally simply connected spaces there may be a generalized covering, which is not a covering. It seems interesting to find some examples in the case of topological groups. 

\begin{example} \label{ex3.12}
Let $ Y $ be the Tychonoff product of the infinite number of $ \mathbb{S}^1 $'s. Clearly, $ Y $ is a topological group since the product of topological groups is also a topological group. Moreover, every open neighbourhood of any $y \in Y $ contains many $ \mathbb{S}^1 $'s except finite number and so $ Y$ is  not a semilocally simply connected space at  $ y \in Y $. Since $ Y $ is locally path connected, hence $ Y $  does not have the classical universal covering space.  Then the universal path space $ p: {\prod}_{i\in I} \mathbb{R} \rightarrow Y $ is not a covering group of $ Y $. Although, it is a generalized covering group. Since $ p_i: \mathbb{R} \rightarrow \mathbb{S}^1 $ is a covering map for every $ i \in I $, then it is also a generalized covering map. Therefore, from \cite[Lemma 2.31]{BrazG} the product of $ p_i $'s, $ p $, is also a generalized covering map.


\end{example}


In the above example, $ p_* \pi_1(\prod_{i \in I} \mathbb{R}, \bold{0}) $ is a trivial subgroup of $ \pi_1(Y,\mathbf{0}) $. It seems interesting to introduce a nontrivial generalized covering subgroup of $ \pi_1(Y,\mathbf{0}) $, which is not a covering subgroup.

\begin{example} \label{ex313}
From the above example, let $ q: \prod_{i \in I}{{\mathbb{R}} \times \mathbb{S}^1} \rightarrow Y$ be as $ q|_{\prod_{i\in I}{\mathbb{R}}} =p $ and identity on $ \mathbb{S}^1 $. Clearly, $ p_*\pi_1(\prod _{i \in I} {\mathbb{R}} \times \mathbb{S}^1,(\bold{0},s)) = \mathbb{Z} $ is a subgroup of the fundamental group of $ Y $ and $ q $  has unique path lifting property by the similar way of the above example. Thus $ q $ is a generalized covering homomorphism. We show that $ q $ is not a local homeomorphism, and so it is not a covering homomorphism. 
Let $ (\bold{x},s) \in {\prod_{i \in I} \mathbb{R}} \times \mathbb{S}^1  $, and let $ U $ be an open subset containing $  (\bold{x},s) $. By the definition of Tychonoff product topology, $ U $ and $ q(U) $ contain many $ \mathbb{R} $'s and $ \mathbb{S}^1 $'s except finite number, respectively. Then for some $ i \in I $ the restriction $ q|_{U_i}: \mathbb{R} \rightarrow \mathbb{S}^1 $ is not injective. This fact implies that $ q $ is not a local homeomorphism.

 \end{example}

It was shown in  \cite[Theorem 1.4.5]{Arx}  that if $G  $ is a left topological group with continuous inverse operation and $ \beta_e $ a local base of the space $ G $ at the identity element $ e $, then, for every subset $ A $ of $ G $
\[
 cl(A)=\overline{A} = \bigcap \{ AU \ | \ U \in \beta_e \} .
 \]

Recall from \cite{BroU,paper2} that for any path connected space $ X $, any subgroup $ H \leq \pi_1(X,x_0) $ and the endpoint projection map $p_H: \widetilde{X}_H \rightarrow X  $,  the fibres  $ p^{-1}_H(x) $ and $ p^{-1}_H(y) $ are homeomorphic, for every $ x,y \in X $, if and only if $ X $ is an $ SLT $ space. Note that $X$ is called a \textit{small loop transfer space ($ SLT $ space for short) } if  for every $ x,y \in X $, for every path $ \alpha : I \rightarrow X $ from $ x  $ to $ y $ and for every neighbourhood $U$ of $x$ there is a neighbourhood $V$ of $y$ such that for every loop $\beta$ in $V$ based at $y$ there is a loop $\gamma$ in $U$ based at $x$,  which is homotopic to $ \alpha\ast\beta\ast\alpha^{-1}$ relative $\dot{I}$. Clearly, every strong $ SLT $ space is also an $ SLT $ space.

For an arbitrary pointed topological sapce $ (X,x_0) $, Abdullahi et al. in \cite[Proposition 3.2 and Lemma 3.1]{paper1} showed that  the fundamental group equipped with the whisker topology, $ \pi_1^{wh}(X,x_0) $, is a left topological group and the collection $ \beta = \{i_*\pi_1(U,x_0) \ | \ U \ is \ an \ open \ subset \ of \ X \ contaning \ x_0 \} $ forms a local basis at the identity element in $ \pi_1^{wh}(X,x_0) $. Now the following proposition is obtained using these results.

\begin{proposition}
If $ (X,x_0) $ is $ SLT $ at $ x_0 $, then every closed subset  $ A  $  of $ \pi_1^{wh}(X,x_0) $ can be written as $ A = \bigcap_{U} \{Ai_*\pi_1(U,x_0) \ | \ U \ is \ an \ open \ subset \ of \ X \ contaning \ x_0 \}$.
\end{proposition}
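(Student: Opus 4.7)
The plan is to derive the displayed formula from \cite[Theorem 1.4.5]{Arx} recalled just before the statement, applied to the group $G = \pi_1^{wh}(X,x_0)$ with local base $\beta_e = \{i_*\pi_1(U,x_0) : x_0 \in U,\, U \text{ open in } X\}$ at its identity. The results of Abdullahi et al.\ \cite{paper1} already give that $\pi_1^{wh}(X,x_0)$ is a left topological group and that $\beta_e$ is a local base at the identity, so the only additional hypothesis of Theorem 1.4.5 to be verified is the continuity of the inversion map $[\gamma] \mapsto [\gamma^{-1}]$; this is exactly where the $SLT$ assumption at $x_0$ will be used.

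Continuity of inversion at the identity $e = [c_{x_0}]$ is trivial, since reversing a loop contained in $U$ produces another loop contained in $U$, whence $(i_*\pi_1(U,x_0))^{-1} = i_*\pi_1(U,x_0)$. For continuity at a general class $[\alpha]$, a basic neighborhood of $[\alpha^{-1}]$ has, by left translation, the form $[\alpha^{-1}] \cdot i_*\pi_1(U,x_0)$. Applying the $SLT$ property at $x_0$ to the loop $\alpha$ and the neighborhood $U$ of $x_0$ produces an open $V \ni x_0$ such that for every loop $\beta$ in $V$ based at $x_0$, $\alpha * \beta * \alpha^{-1}$ is path-homotopic to a loop in $U$ based at $x_0$. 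Applied to $\beta^{-1}$, which is also a loop in $V$, this gives $[\alpha * \beta^{-1} * \alpha^{-1}] \in i_*\pi_1(U,x_0)$, and hence $[\alpha * \beta]^{-1} = [\beta^{-1} * \alpha^{-1}] = [\alpha^{-1}] \cdot [\alpha * \beta^{-1} * \alpha^{-1}] \in [\alpha^{-1}] \cdot i_*\pi_1(U,x_0)$. Thus inversion carries the basic neighborhood $[\alpha] \cdot i_*\pi_1(V,x_0)$ of $[\alpha]$ into the prescribed neighborhood of $[\alpha^{-1}]$, establishing continuity at $[\alpha]$.

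Once the inversion is continuous, \cite[Theorem 1.4.5]{Arx} immediately yields $\overline{A} = \bigcap_{U} A \cdot i_*\pi_1(U,x_0)$ for every subset $A$ of $\pi_1^{wh}(X,x_0)$, where $U$ ranges over open neighborhoods of $x_0$. Since $A$ is closed we have $A = \overline{A}$, which is precisely the claimed representation. The principal obstacle in this argument is genuinely the continuity of inversion at classes $[\alpha] \ne e$: continuity of left translations is already packaged in the left topological group structure of \cite{paper1}, but without a small-loop transfer hypothesis one cannot keep the conjugated inverse $\alpha * \beta^{-1} * \alpha^{-1}$ inside a prescribed neighborhood of $x_0$, and it is exactly this content that the $SLT$ condition at $x_0$ is tailor-made to supply.
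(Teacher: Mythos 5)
Your proposal is correct and follows essentially the same route as the paper: both reduce the statement to \cite[Theorem 1.4.5]{Arx} combined with the facts from \cite[Proposition 3.2 and Lemma 3.1]{paper1} that $\pi_1^{wh}(X,x_0)$ is a left topological group with $\{i_*\pi_1(U,x_0)\}$ a local base at the identity, the only point requiring the $SLT$ hypothesis being continuity of inversion. The one difference is that the paper gets that continuity by citing \cite[Corollary 2.4]{paper2} (which asserts $\pi_1^{wh}(X,x_0)$ is a topological group when $X$ is $SLT$ at $x_0$), whereas you verify it directly from the $SLT$ condition; your verification is correct.
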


\begin{proof}
It is easily concluded from \cite[Corollary 2.4]{paper2} that $ \pi_1^{wh}(X,x_0) $ is a topological group, since $ X $ is  $ SLT $ at $ x_0 $. Then the inverse operation is continuous. Now the result comes from \cite[Proposition 3.2 and Lemma 3.1]{paper1} and \cite[Theorem 1.4.5]{Arx} .
\end{proof}

If $ (X,x_0) $ is a strong $ SLT $ at $ x_0 $ space, then it is easily concluded from the definition that for any open neighbourhood $  U$ of $ x_0 $ in $ X $, there is an open covering $ \mathcal{U} $ of $ X $ such that $\pi (\mathcal{U}, x_0)\leq i_{*}\pi_{1}(U,x_0) $. On the other hand, by \cite[Corollary 3.10]{paper1} if $ H \leq \pi_1(X,x_0) $ is a generalized covering subgroup, then $ H $ is closed under the whisker topology on the fundamental group. The following theorem states a nice result of this fact.

\begin{theorem} \label{41}
Let $ (X,x_0) $ be a connected locally path connected space which is strong $ SLT $ at $ x_0 $, and let $ p:\widetilde{X} \rightarrow X $  be a map. The pair $ (\widetilde{X},p) $ is a generalized covering of $ X $ with $ p_*\pi_1(\widetilde{X},\tilde{x}_0)=H \leq \pi_1(X,x_0)$ if and only if $ H $ is as intersection of some covering subgroups in $ \pi_1(X,x_0) $. 
\end{theorem}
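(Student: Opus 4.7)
My plan is to prove the biconditional in two parts: the forward direction leverages the machinery already set up in the paper, while the reverse direction requires assembling a lift across a family of classical covers.

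For the forward implication, I would note first that any generalized covering subgroup $H$ is closed in $\pi_{1}^{wh}(X,x_0)$ by Corollary 3.10 of \cite{paper1}. Since strong $SLT$ at $x_0$ implies $SLT$ at $x_0$, the Proposition immediately preceding the theorem gives $H = \bigcap_{U} H \cdot i_{*}\pi_{1}(U,x_0)$, with $U$ ranging over open neighborhoods of $x_0$. The strong $SLT$ property then supplies, for each such $U$, an open cover $\mathcal{U}$ of $X$ with $\pi(\mathcal{U},x_0) \leq i_{*}\pi_{1}(U,x_0)$. Because $\pi(\mathcal{U},x_0)$ is normal in $\pi_{1}(X,x_0)$, the set $K_{U} := H \cdot \pi(\mathcal{U},x_0)$ is a subgroup; since it contains a Spanier subgroup of an open cover and $X$ is connected and locally path connected, Spanier's classification identifies $K_{U}$ as a covering subgroup containing $H$. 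Sandwiching $\bigcap_{U} K_{U}$ between $H$ and $\bigcap_{U} H \cdot i_{*}\pi_{1}(U,x_0) = H$ then yields $H = \bigcap_{U} K_{U}$, as required.

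For the reverse implication, write $H = \bigcap_{\lambda} K_{\lambda}$ with each $K_{\lambda}$ a covering subgroup and consider the natural projections $q_{\lambda}:\widetilde{X}_{H}\to \widetilde{X}_{K_{\lambda}}$ sending $\langle \alpha \rangle_{H} \mapsto \langle \alpha \rangle_{K_{\lambda}}$, which are well defined since $H \leq K_{\lambda}$. Their product $q:\widetilde{X}_{H}\to \prod_{\lambda}\widetilde{X}_{K_{\lambda}}$ is injective: if $q(\langle \alpha \rangle_{H})=q(\langle \beta \rangle_{H})$, then $\alpha(1)=\beta(1)$ and $[\alpha \ast \beta^{-1}]\in K_{\lambda}$ for every $\lambda$, whence $[\alpha \ast \beta^{-1}]\in H$. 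To show that $p_{H}$ has the unique lifting property, I would take a continuous $f:(Y,y_{0})\to(X,x_{0})$ with $Y$ connected and locally path connected and $f_{*}\pi_{1}(Y,y_{0})\leq H$; each $\widetilde{X}_{K_{\lambda}}$ is a classical cover, so $f$ lifts uniquely to $\tilde{f}_{\lambda}:Y\to\widetilde{X}_{K_{\lambda}}$. The compatible family $\{\tilde{f}_{\lambda}\}$ should then assemble into a candidate lift $\tilde{f}:Y\to\widetilde{X}_{H}$ via $\tilde{f}(y)=\langle f\circ \gamma_{y}\rangle_{H}$ for any path $\gamma_{y}$ from $y_{0}$ to $y$, and uniqueness is immediate from injectivity of $q$ together with unique lifting through each $q_{\lambda}$.

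The main obstacle will be verifying continuity of $\tilde{f}$, since a basic whisker-open set $N(\langle \alpha \rangle_{H},V)$ in $\widetilde{X}_{H}$ is genuinely finer than its images in any single $\widetilde{X}_{K_{\lambda}}$. My intended route is to show that $q$ realizes $\widetilde{X}_{H}$ as a subspace of $\prod_{\lambda}\widetilde{X}_{K_{\lambda}}$, so that continuity of $\tilde{f}$ reduces to continuity of the classical lifts $\tilde{f}_{\lambda}$; carrying this out should again invoke the strong $SLT$ hypothesis, so that neighborhoods $N(\langle \alpha \rangle_{H},V)$ can be detected by compatible whisker neighborhoods in the factors $\widetilde{X}_{K_{\lambda}}$, closing the argument.
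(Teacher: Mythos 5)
Your first implication is essentially the paper's own argument: the paper likewise combines closedness of a generalized covering subgroup in $\pi_1^{wh}(X,x_0)$ (via \cite[Corollary 3.10]{paper1}) with the closure formula $\overline{H}=\bigcap_U H\, i_*\pi_1(U,x_0)$ and the observation that strong $SLT$ at $x_0$ forces each $H\, i_*\pi_1(U,x_0)$ to contain a Spanier subgroup of an open cover, hence to be a covering subgroup by Spanier's classification; your variant with $K_U=H\cdot \pi(\mathcal{U},x_0)$ and the sandwich $H\leq\bigcap_U K_U\leq\bigcap_U H\,i_*\pi_1(U,x_0)=H$ is an equivalent packaging. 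For the converse the paper simply cites the fact that an intersection of covering subgroups is a generalized covering subgroup (\cite[Corollary 2.13]{paper1}), whereas you set out to reprove it directly; that is legitimate, but your sketch has a gap exactly where you place the difficulty. The map $q=\prod_\lambda q_\lambda$ is injective and continuous, but it is in general \emph{not} an embedding when the index set is infinite: a basic open set of the Tychonoff product constrains only finitely many coordinates, so lying in $q\bigl(\widetilde{X}_H\bigr)\cap\prod_i N(\langle\alpha\rangle_{K_{\lambda_i}},V_i)$ only yields $[\beta\ast(\alpha\ast\gamma_i)^{-1}]\in K_{\lambda_i}$ for finitely many indices, with possibly different witnessing paths $\gamma_i$ in $V_i$; this does not place $\langle\beta\rangle_H$ in $N(\langle\alpha\rangle_H,V)$, which requires a single witness and membership in \emph{all} the $K_\lambda$. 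So the proposed reduction of continuity to the factors via a subspace identification would fail.

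Fortunately, the step you flag as the main obstacle is not where the difficulty lies. Continuity of the standard lift $\tilde f(y)=\langle f\circ\gamma_y\rangle_H$ is direct and needs no product space: given a basic neighbourhood $N(\tilde f(y),V)$, choose a path connected open $W\ni y$ with $f(W)\subseteq V$; for $y'\in W$ take $\gamma_{y'}=\gamma_y\ast\delta$ with $\delta$ a path in $W$ from $y$ to $y'$, so that $\tilde f(y')=\langle (f\circ\gamma_y)\ast(f\circ\delta)\rangle_H\in N(\tilde f(y),V)$. The genuinely nontrivial point in the converse is \emph{uniqueness} of lifts, and there your argument is correct: two continuous lifts composed with each $q_\lambda$ give lifts of $f$ to the classical cover $\widetilde{X}_{K_\lambda}$ agreeing at the basepoint, hence agree after applying each $q_\lambda$, hence agree by injectivity of $q$. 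With the continuity argument replaced as above, your converse becomes a complete, self-contained proof of the special case of \cite[Corollary 2.13]{paper1} that the theorem needs.
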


\begin{proof}
It is easily concluded from \cite[Corollary 2.13]{paper1} that the intersection of any collection of covering subgroups of the fundamental group is a generalized covering subgroup. Conversely, Since $ X $ is strong $ SLT $ at $ x_0 $, then for any open neighbourhood $  U$ of $ x_0 $ in $ X $, there is an open covering $ \mathcal{U} $ of $ X $ such that $\pi (\mathcal{U}, x_0)\leq i_{*}\pi_{1}(U,x_0) $. It means that open subgroups of the whisker topology  are covering subgroups. Thus for any open neighbourhood $ U $ of $ X $ at $ x_0 $ and any subgroup $ H $ of $ \pi_1(X,x_0) $, the subgroup $ Hi_*\pi_1(U,x_0) $ is a covering subgroup of $ \pi_1(X,x_0) $ by \cite[Theorem 2.5.13]{Span}. Finally, if $ H $ is a generalized covering subgroup, then it is closed under the whisker topology on the fundamental group (see \cite[Corollary 3.10]{paper1}) and so $ H = \overline{H} = \bigcap_{U} \{Hi_*\pi_1(U,x_0) \} $.
\end{proof}

\begin{corollary} \label{co42}
Let $ (X,x_0) $ be a connected locally path connected space which is strong $ SLT $ at $ x_0 $. A subgroup $ H \leq \pi_1(X,x_0) $ is a generalized covering subgroup if and only if $ H $ is an intersection of all covering subgroups which contain $ H $.
\end{corollary}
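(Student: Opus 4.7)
The plan is to derive this corollary almost immediately from Theorem \ref{41} by noticing that, for any subgroup $H$, the condition ``$H$ equals an intersection of some covering subgroups'' is equivalent to ``$H$ equals the intersection of all covering subgroups containing $H$.'' So the work is really a two-line set-theoretic observation wrapped around the previous theorem, and I do not expect any serious obstacle.

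For the forward direction, I would assume $H \leq \pi_1(X,x_0)$ is a generalized covering subgroup. Applying Theorem \ref{41}, I get a family $\{K_\alpha\}$ of covering subgroups with $H = \bigcap_\alpha K_\alpha$. In particular each $K_\alpha$ contains $H$, so $\{K_\alpha\}$ is a subfamily of the collection $\mathcal{C}_H$ of all covering subgroups containing $H$. Then I would use the obvious sandwich: on one hand $H \subseteq \bigcap_{K \in \mathcal{C}_H} K$ since every member of $\mathcal{C}_H$ contains $H$, and on the other hand $\bigcap_{K \in \mathcal{C}_H} K \subseteq \bigcap_\alpha K_\alpha = H$, giving equality.

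For the converse, if $H$ equals the intersection of all covering subgroups containing it, then in particular $H$ is expressible as an intersection of \emph{some} family of covering subgroups, so Theorem \ref{41} directly yields that $H$ is a generalized covering subgroup.

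The only thing to double-check is the degenerate case in which no covering subgroup contains $H$, but this never happens here: the whole group $\pi_1(X,x_0)$ itself is a covering subgroup (corresponding to the trivial covering $\mathrm{id}\colon X \to X$), so $\mathcal{C}_H$ is nonempty for every $H$. With this remark the argument is complete.
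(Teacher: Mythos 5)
Your proof is correct, but it takes a more economical route than the paper. The paper does not simply quote Theorem \ref{41}: for the harder direction it re-runs the whisker-topology argument, showing that every $K$ in the family $\Gamma_H$ of covering subgroups containing $H$ traps a subgroup of the form $Hi_*\pi_1(U_K,x_0)$, that each such $Hi_*\pi_1(U,x_0)$ is itself a covering subgroup lying in $\Gamma_H$, and then concluding $\bigcap_{K\in\Gamma_H}K=\bigcap_{K}Hi_*\pi_1(U_K,x_0)=\overline{H}=H$ from the closedness of $H$ in $\pi_1^{wh}(X,x_0)$ and the closure formula. You instead treat Theorem \ref{41} as a black box and observe the purely set-theoretic sandwich: any witnessing family $\{K_\alpha\}$ with $H=\bigcap_\alpha K_\alpha$ is a subfamily of $\Gamma_H$, so $H\subseteq\bigcap_{K\in\Gamma_H}K\subseteq\bigcap_\alpha K_\alpha=H$; the converse is immediate since the intersection over $\Gamma_H$ is in particular an intersection of \emph{some} covering subgroups. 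Your remark that $\Gamma_H$ is nonempty (the full group being a covering subgroup via the identity covering) is a sensible precaution, though not logically needed for the sandwich. What your approach buys is brevity and a clean separation of concerns; what the paper's proof buys is the extra, and later useful, information that the cofinal subfamily $\{Hi_*\pi_1(U,x_0)\}$ of $\Gamma_H$ consists of \emph{open} (hence covering) subgroups indexed by neighbourhoods of $x_0$ --- this explicit description is exactly what is exploited again in the proof of Theorem \ref{th43}. Either proof is acceptable for the corollary as stated.
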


\begin{proof}
Assume that $ \Gamma_H  $  is the collection of all covering subgroups of $ \pi_1(X,x_0) $ containing $ H $. Clearly, $ \bigcap_{K \in \Gamma_H} K $ is a generalized covering subgroup. Conversely, by \cite[Theorem 3.1]{paper4} and \cite[Lemma 3.1]{paper1} for every $ K \in \Gamma_H $, there is an open neighbourhood $ U_K $ of $ X $ at $ x_0 $ such that $ i_*\pi_1(U_K,x_0) \leq K $. Thus $ Hi_*\pi_1(U_K,x_0) \leq K $. As mentioned above for every open neighbourhood $ U $ of $ X $ at $ x_0 $, there is an open covering $ \mathcal{U} $ of $ X $ such that $\pi (\mathcal{U}, x_0)\leq i_{*}\pi_{1}(U,x_0) \leq Hi_*\pi_1(U,x_0) $, which shows that $ Hi_*\pi_1(U,x_0) $  is a covering subgroup. Therefore, $ Hi_*\pi_1(U,x_0) \in \Gamma_H $, for every open neighbourhood $ U $ of $ X $ at $ x_0 $. It implies that $ \bigcap_{K \in \Gamma_H} K = \bigcap_{K \in \Gamma_H} Hi_*\pi_1(U_{K},x_0)=\overline{H}=H  $.
\end{proof}


\section{Kernel of Generalized Coverings of Topological Group} \label{se4}

If $ p: \widetilde{X} \rightarrow X $ is a generalized covering map of a path connected $ SLT $ space $ X $ such that $ p_*\pi_1(\widetilde{X},\tilde{x}_0)=H \leq \pi_1(X,x_0) $, since  $\widetilde{X}  $ and $ \widetilde{X}_H $ are homeomorphic, by Lemma \ref{le33}, then for every $ x,y \in X $ the fibres $ p^{-1}(x) $ and $ p^{-1}(y) $ are homeomorphic.

\begin{corollary} \label{co37}
If $ (\widetilde{G},p) $ is a generalized covering group of a path connected topological group $ G $, then the fibres of all elements of $ G $ are homeomorphic subspaces of $ \widetilde{ G} $. 
\end{corollary}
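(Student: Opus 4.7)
The plan is to exploit the group structure on $\widetilde{G}$ together with the fact that $p$ is a group homomorphism, which follows from Proposition~\ref{pr34} and Corollary~\ref{co35}. The proof then reduces to showing that left translation inside $\widetilde G$ carries one fibre onto another homeomorphically.

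First I would note that $p$ is surjective by Corollary~\ref{co35}, so every $g\in G$ admits a preimage $\tilde g\in\widetilde G$. Since $p$ is a homomorphism, the fibre over $g$ equals the coset $\tilde g\cdot\ker p$, and in particular $p^{-1}(e)=\ker p$. Then for any two elements $g,h\in G$ with chosen preimages $\tilde g,\tilde h\in\widetilde G$, I would consider the left translation $L_{\tilde h\tilde g^{-1}}:\widetilde G\to\widetilde G$, which is a self-homeomorphism of the topological group $\widetilde G$. A direct check, using $p(\tilde h\tilde g^{-1}x)=hg^{-1}p(x)$, shows that $L_{\tilde h\tilde g^{-1}}$ sends $p^{-1}(g)$ bijectively onto $p^{-1}(h)$; restricted to the fibres, this yields the desired homeomorphism $p^{-1}(g)\cong p^{-1}(h)$. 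As a byproduct, every fibre is homeomorphic to $\ker p$.

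An alternative route is to invoke the result cited just before the corollary: since every topological group is strong $SLT$ at the identity by Torabi's theorem, and since strong $SLT$ at one point transfers to every point via the homeomorphism $L_g$, the ambient space $G$ is an $SLT$ space, and the general fibre-homogeneity statement for $SLT$ spaces applies. However, the direct kernel argument seems both shorter and more informative, since it additionally identifies each fibre with $\ker p$. I do not anticipate any real obstacle here: the only subtlety is to be sure the earlier results (Proposition~\ref{pr34} and Corollary~\ref{co35}) are in place to justify that $p$ is both a homomorphism and surjective.
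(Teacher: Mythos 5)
Your proof is correct, but it takes a genuinely different route from the paper's. The paper deduces the corollary from the general fibre--homogeneity result for $SLT$ spaces stated just above it: since every path connected topological group is strong $SLT$ (hence $SLT$) by Torabi's theorem, and since $\widetilde{G}$ is homeomorphic to $\widetilde{G}_H$ over $G$ by Lemma~\ref{le33}, the fibres over any two points are homeomorphic. You instead use the algebraic structure directly: by Definition of a generalized covering group, $\widetilde{G}$ is a topological group and $p$ is a homomorphism (so your appeal to Proposition~\ref{pr34} and Corollary~\ref{co35} is not even strictly needed), surjectivity follows from path connectedness of $G$ via path lifting, and left translation by $\tilde h\tilde g^{-1}$ is a self-homeomorphism of $\widetilde G$ carrying the coset $p^{-1}(g)=\tilde g\cdot\ker p$ onto $p^{-1}(h)=\tilde h\cdot\ker p$. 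Your argument is more elementary and yields the extra information that every fibre is homeomorphic to $\ker p$ --- a fact the paper implicitly relies on later when it studies $\ker p_H$ --- whereas the paper's argument has the advantage of working for arbitrary $SLT$ spaces, not just topological groups. You also correctly identify the paper's route as your stated alternative.
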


\begin{proof}
By Theorem 2.11 in \cite{Torabi} every path connected topological group $ G $ is strong $ SLT $ and so $ SLT  $ space. Therefore, the result comes from the above assertion.
\end{proof}

\begin{proposition} \label{pr37}
Let $ (X,x_0) $ be an arbitrary topological space. If $ (\widetilde{X},p) $ is a generalized covering space, then the fibre $ p^{-1}(x_0) \subseteq \widetilde{X} $ is  totally path disconnected.
\end{proposition}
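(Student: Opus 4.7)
The plan is to invoke the unique lifting property directly against the unit interval. Given any path $\gamma : I \to p^{-1}(x_0)$, its composition $p \circ \gamma$ is the constant map $c_{x_0} : I \to X$. Both $\gamma$ itself and the constant path at $\gamma(0)$ are continuous lifts of $c_{x_0}$ that send $0 \in I$ to the same point $\gamma(0) \in \widetilde{X}$, so if unique lifting is available at the basepoint $\gamma(0)$, uniqueness forces $\gamma$ to be constant, which is exactly the claim that $p^{-1}(x_0)$ is totally path disconnected.

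The steps I would carry out, in order, are: (i) fix an arbitrary path $\gamma : (I,0) \to (\widetilde{X}, \gamma(0))$ whose image lies in $p^{-1}(x_0)$; (ii) note $\pi_1(I,0)$ is trivial, so the inclusion $(c_{x_0})_*\pi_1(I,0) \subseteq p_*\pi_1(\widetilde{X},\gamma(0))$ is automatic, meaning the hypothesis for unique lifting is satisfied at the basepoint $\gamma(0)$; (iii) produce two lifts of $c_{x_0}$ through $\gamma(0)$, namely $\gamma$ and the constant path $c_{\gamma(0)}$; (iv) invoke uniqueness to conclude $\gamma = c_{\gamma(0)}$.

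The only real obstacle is the basepoint issue: the definition of generalized covering space stated in the excerpt fixes the unique lifting property at a specific basepoint $\tilde{x}_0$, but we need it at an arbitrary $\gamma(0) \in p^{-1}(x_0)$. I would handle this by appealing to Lemma \ref{le33}, which gives a homeomorphism $h : \widetilde{X} \to \widetilde{X}_H$ intertwining $p$ and $p_H$. Any point $\tilde{y} \in p^{-1}(x_0)$ corresponds under $h$ to some equivalence class $\langle \alpha \rangle_H$ of a loop $\alpha$ at $x_0$, and concatenation with $\alpha$ identifies $(\widetilde{X}_H, \langle \alpha\rangle_H)$ with $(\widetilde{X}_H, e_H)$ as a based generalized covering; thus the unique lifting property transfers to the basepoint $\tilde{y}$ without loss.

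Once the basepoint matter is settled, the argument is essentially one line of uniqueness. Since the chosen $\gamma$ was arbitrary, every path in $p^{-1}(x_0)$ is constant, so the fibre is totally path disconnected.
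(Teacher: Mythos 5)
Your proposal is correct and follows essentially the same route as the paper: the paper's proof also takes a path $\alpha$ in the fibre, observes $p\circ\alpha=p\circ C_{\tilde{x}_0}=C_{x_0}$, and invokes uniqueness of lifts to force $\alpha$ to be constant. Your extra care about the basepoint is reasonable (the paper silently uses that the unique lifting property holds at every point of the fibre, which is built into the definition ``for $\tilde{x}_0\in p^{-1}(x_0)$''), though note that your concatenation map $\langle\beta\rangle_H\mapsto\langle\alpha\ast\beta\rangle_H$ is only well defined up to replacing $H$ by the conjugate $[\alpha]H[\alpha]^{-1}$; this does not affect the conclusion.
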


\begin{proof}
Let $ \alpha :I \rightarrow p^{-1}(x_0) $ be a path with $ \alpha(0)=\tilde{x}_0 $, and let $ C_{\tilde{x}_0} $ be the constant path in $ \tilde{x}_0 $. Clearly, $ p \circ \alpha = p \circ C_{\tilde{x}_0} = C_{x_0} $ and $ \alpha(0)=C_{\tilde{x}_0}(0) $. Since $ p $ has unique path lifting property, hence $ \alpha=C_{\tilde{x}_0} $. Therefore, $ p^{-1}(x_0)  $ has no nonconstant path; that is, it is totally path disconnected space.  
\end{proof}

\begin{corollary} \label{co38}
If $ (\widetilde{G},p) $ is a generalized covering group of connected locally path connected topological group $ G $, then $ p $ is an open epimorphism with totally path disconnected kernel.
\end{corollary}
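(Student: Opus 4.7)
The plan is to observe that the corollary reduces to three separate verifications: that $p$ is surjective, that $p$ is open, and that $\ker p$ is totally path disconnected. Since $(\widetilde{G},p)$ is a generalized covering group by hypothesis, we already know from the definition that $\widetilde{G}$ is a topological group and $p$ is a continuous group homomorphism, so surjectivity will immediately upgrade $p$ to an epimorphism.

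For surjectivity and openness, I would pull both properties back from the standard endpoint projection map via Lemma~\ref{le33}. Setting $H=p_*\pi_1(\widetilde{G},\tilde{e})$, the lemma produces a homeomorphism $h:\widetilde{G}\to \widetilde{G}_H$ satisfying $p_H\circ h=p$. The sentence immediately preceding Lemma~\ref{le33} records that $p_H$ is surjective when the base is path connected, and open when it is locally path connected; both hypotheses are in force for $G$. Since $h$ is a homeomorphism, surjectivity and openness transfer intact to $p=p_H\circ h$.

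For the kernel, I would apply Proposition~\ref{pr37} with basepoint $e\in G$. Because $p$ is a homomorphism, $\ker p=p^{-1}(e)$, and Proposition~\ref{pr37} states precisely that this fibre, viewed as a subspace of $\widetilde{G}$, is totally path disconnected.

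I do not expect a serious obstacle here: the corollary is essentially a repackaging of Proposition~\ref{pr34}, Lemma~\ref{le33}, the surjectivity/openness remarks for $p_H$ cited from \cite{paper1,FiZa}, and Proposition~\ref{pr37}. The only point requiring a little care is to ensure that the hypotheses of each cited result are indeed supplied by the standing assumption that $G$ is a connected, locally path connected topological group together with the definition of a generalized covering group, which they are.
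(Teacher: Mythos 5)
Your proposal is correct and follows essentially the same route as the paper: the paper likewise deduces that $p$ is epic and open from the connectedness and local path connectedness of $G$ (via the identification with $p_H$), and obtains the totally path disconnected kernel by combining Proposition~\ref{pr34} (so that $\ker p = p^{-1}(e)$) with Proposition~\ref{pr37}. You have merely spelled out the ``easy to check'' part in more detail.
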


\begin{proof}
It is easy to check that $ p $ is epic and open, since $ G $ is connected and locally path connected, respectively. The result comes from combination of Propositions \ref{pr34} and \ref{pr37}.
\end{proof}

It seems interesting whether the converse statement of the above corollary can be correct. There are some counterexamples even with extra conditions in very special case. For instance, where $ p: G \rightarrow G/H $ is an open epimorphism with totally path disconnected kernel and $ H $ is a normal subgroup of a topological group $ G $. In the next section we show that the necessary and sufficient condition to make  $ p: G \rightarrow G/H $ a generalized covering is the prodiscrete $ H $.

\begin{example} \label{ex315}
Let $ H= \mathbb{S}^1 \cap (\mathbb{Q} \times \mathbb{Q}) $ be the subgroup of Abelian topological group $ \mathbb{S}^1 $ (hence $ H \unlhd \mathbb{S}^1 $), and let $ p: \mathbb{S}^1 \rightarrow \frac{\mathbb{S}^1}{H} $ be the natural canonical map. Theorem 4.14 from \cite{Spiv} showed that $ p $ is an onto, continuous, and open map. It is easy to show that the kernel of $ p $ is equal to $ H $. Moreover, it is clear that every path in $ H $ is constant and thus it is a totally path disconnected subspace of $ \mathbb{S}^1 $. Note that since $ H $ is a dense subgroup of $ \mathbb{S}^1 $, the quotient space $ \frac{ \mathbb{S}^1 }{H} $ and its relative fundamental group, $ \pi_1(\frac{ \mathbb{S}^1 }{H},H) $, are trivial space and group, respectively. Therefore, if $  (\mathbb{S}^1,p) $ is a generalized covering group of $ \frac{ \mathbb{S}^1 }{H} $, then it is also a covering group, because the image of induced map   $ p_* \pi_1(\mathbb{S}^1,0) $ is equal to $ \pi_1(\frac{ \mathbb{S}^1 }{H},H) $. But  it is impossible, since $ H $ is not discrete.
\end{example}

 The concept of prodiscrete space  has been defined in different ways in various sources. In this paper, we define the concept of prodiscrete subgroup of a topological group as follows based on the need that we felt.
 
 \begin{definition} \label{de43}
 Let $ G $ be a topological group. We call  $ H \leq G $ a prodiscrete subgroup if there are some discrete groups $ H_i, \ i \in I $, and an isomorphism homeomorphism $ \psi : \prod_{i \in I} H_i \rightarrow H $ such that $  \psi(\prod_{i \in I, i \neq j} H_i) \trianglelefteq G $ for every $ j \in I $.
 \end{definition}
 
  Recall from \cite[Section 2.3]{BrazG} that the pull-back construction helped to show that the intersection of any collection of generalized covering subgroup is also a generalized covering subgroup. Although, there is another simple proof in \cite[Corollary 2.13]{paper1}; using pull-backs of generalized coverings will be useful to study the fibres of generalized covering maps: Let $ p: \widetilde{X} \rightarrow X $ be a generalized covering of locally path connected space $ X $, and let $ f: Y \rightarrow X $ be a map. The topological pull-back $ \diamondsuit = \{(\tilde{x},y) \in \widetilde{X} \times Y \ | \ p(\tilde{x})=f(y) \}$ is a subspace of the direct product $ \widetilde{X} \times Y $. Now for the base points $ y_0 \in Y $ and $ x_0 = f(y_0) \in X $, pick $ \tilde{x}_0 \in p^{-1}(x_0) $, and let $ f^\# \widetilde{X} $ be the path component of $ \diamondsuit $ containing $ (\tilde{x}_0,y_0) $. The projection $ f^\#p : f^\# \widetilde{X} \rightarrow Y $ with $ f^\#p(\tilde{x},y) = y $ is called the pull-back of $ \widetilde{X} $ by $ f $. Brazas showed that for a generalized covering $ p: \widetilde{X} \rightarrow X $ and a map $ f: Y \rightarrow X $, the pull-back $ f^\#p : f^\# \widetilde{X} \rightarrow Y $ is also a generalized covering \cite[Lemma 2.34]{BrazG}.
  
  \begin{theorem} \label{th43}
  If $ (G,e) $ is a connected locally path connected topological group and $ H $ is a generalized covering subgroup of $ \pi_1(G,e) $, then the kernel of the endpoint projection homomorphism $ p_H : (\widetilde{G}_H, \tilde{e}_H) \rightarrow (G,e)  $  is a prodiscrete subgroup.
  \end{theorem}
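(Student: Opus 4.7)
My strategy is to verify the two clauses of Definition~\ref{de43} for $\ker p_H\trianglelefteq \widetilde{G}_H$: that each ``hyperplane'' factor of the prospective decomposition is normal in $\widetilde{G}_H$, and that $\ker p_H$ is topologically isomorphic to a Tychonoff product of discrete groups. The normality clause I would discharge quickly by showing $\ker p_H$ is central in $\widetilde{G}_H$. Indeed, by Proposition~\ref{pr37} the kernel $\ker p_H=p_H^{-1}(e)$ is totally path disconnected, while $\widetilde{G}_H$ is path connected as a quotient of the path space $P(G,e)$. Hence for every $x\in\ker p_H$ the continuous conjugation $c_x(\tilde{g}):=\tilde{g}\,x\,\tilde{g}^{-1}$ sends the path connected $\widetilde{G}_H$ into the totally path disconnected $\ker p_H$ and must therefore be constant at $x$, so every subgroup of $\ker p_H$ is automatically normal in $\widetilde{G}_H$ and the hyperplane condition is satisfied for free.

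For the product decomposition I would invoke Corollary~\ref{co42}: since $G$ is a connected locally path connected topological group, it is strong SLT at $e$, so the generalized covering subgroup $H$ equals $\bigcap_{U\in\mathcal{U}}K_U$, where $\mathcal{U}$ is a base of open neighbourhoods of $e$ in $G$ and $K_U:=H\cdot i_*\pi_1(U,e)$. Each $K_U$ is a covering subgroup, so by Remark~\ref{reNew} the endpoint projection $p_{K_U}$ is a classical covering homomorphism with discrete kernel $H_U\cong \pi_1(G,e)/K_U$. Using the identification $\ker p_H\cong \pi_1(G,e)/H$, the natural comparison map $\Phi:\ker p_H\to \prod_{U\in\mathcal{U}}H_U$ induced by the quotient maps $\pi_1(G,e)/H\to\pi_1(G,e)/K_U$ is continuous, injective (because $\bigcap K_U=H$), and a topological embedding onto its image, since the basic neighbourhood $K_V/H$ of $\tilde{e}_H$ in $\ker p_H$ is exactly the preimage under $\Phi$ of the subbasic open set whose $V$-th coordinate is the identity of $H_V$.

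The principal obstacle is showing that $\Phi$ is also surjective, so that $\ker p_H$ equals the full Tychonoff product rather than a proper inverse-limit subgroup of it. For this I would exploit the pull-back machinery recalled just before the theorem statement: for any finite $F\subseteq \mathcal{U}$ the fibre product over $G$ of the coverings $\{p_{K_U}\}_{U\in F}$ is again a (generalized) covering whose kernel is the finite direct product $\prod_{U\in F}H_U$, and by Brazas's pull-back lemma it receives a canonical map from $\widetilde{G}_H$; hence every finite tuple in the product lifts to an element of $\ker p_H$. The delicate step is the passage from finite $F$ to all of $\mathcal{U}$: one must assemble compatible partial lifts into an honest path class in $\widetilde{G}_H$ representing a prescribed infinite tuple, and here the topological-group structure of $G$ together with its local path connectedness should permit a limit construction producing the required preimage for every element of $\prod_{U\in\mathcal{U}}H_U$. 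Once surjectivity is secured, $\Phi^{-1}$ is the isomorphism-homeomorphism $\psi$ required by Definition~\ref{de43}, completing the verification.
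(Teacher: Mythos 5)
Your centrality argument for the normality clause is correct and clean: since $p_H^{-1}(e)$ is totally path disconnected (Proposition \ref{pr37}) while $\widetilde{G}_H$ is path connected, each conjugation orbit map is constant, so $\ker p_H$ is central and every subgroup of it is automatically normal in $\widetilde{G}_H$. The gap is exactly where you locate it, and it is not a removable technicality: the comparison map $\Phi\colon\ker p_H\cong\pi_1(G,e)/H\to\prod_{U}\pi_1(G,e)/K_U$ induced by the quotient maps cannot be surjective in general. The family $K_U=Hi_*\pi_1(U,e)$ is directed (if $U\subseteq V$ then $K_U\leq K_V$), so every tuple in the image of $\Phi$ is compatible with the connecting quotients $\pi_1(G,e)/K_U\to\pi_1(G,e)/K_V$; hence the image of $\Phi$ lies in the inverse limit $\varprojlim_U\pi_1(G,e)/K_U$, which is a proper subgroup of the Tychonoff product as soon as one of these connecting maps has a nontrivial target. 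Your finite-stage pull-back argument only shows that $\Phi$ followed by projection onto each finite sub-product is surjective, i.e.\ that the image is dense in the inverse limit; no ``limit construction'' can upgrade this to surjectivity onto the full product, because the obstruction is algebraic (compatibility of coordinates), not topological. As set up, your decomposition step therefore fails.

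The paper's proof takes a different route precisely to sidestep this. It never maps $\ker p_H$ into the product of the quotients; instead it forms the product covering $p=\prod_j p_j\colon\prod_j\widetilde{G}_j\to\prod_j G$ over the \emph{product} of copies of $G$ (a generalized covering by \cite[Lemma 2.31]{BrazG}), pulls it back along the diagonal $\Delta\colon G\to\prod_j G$, verifies by a loop-lifting computation that the resulting generalized covering of $G$ has image subgroup exactly $H=\bigcap_j K_j$, and then invokes Lemma \ref{le33} to identify this pull-back with $\widetilde{G}_H$; the kernel of the pulled-back map is then read off as $\prod_j\ker(p_j)\times\{e\}$, a product of the discrete kernels of the coverings $p_j$. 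In other words, the product structure is transported to $\ker p_H$ through a homeomorphic model rather than assembled coordinate-by-coordinate inside $\pi_1(G,e)/H$ --- though your difficulty does resurface there in the guise of whether the full pull-back, rather than its path component, is the object to which Lemma \ref{le33} applies. To repair your argument you would have to either replace the target of $\Phi$ by the inverse limit (which is the Berestovskii--Plaut notion of prodiscrete, not Definition \ref{de43}) or reproduce the paper's pull-back model.
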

  
  \begin{proof}
   Let $  \{K_{j} \ | \  j \in J \} $  be the collection of all covering subgroups of $ \pi_1(G,e) $, which contain $ H $. By Corollary \ref{co42}, $ H= \bigcap_{j \in J} K_j $.
   For every $ j \in J  $, put $ p_{j} : (\widetilde{G}_{j},\tilde{e}_{j}) \rightarrow (G,e) $ as the relative covering homomorphism of $ K_{j} $; that is, $ K_{j}=(p_{j})_*\pi_1(\widetilde{G}_{j},\tilde{e}_{j}) $.
 Take the direct product $ \widetilde{G}=\prod_{j \in J}\widetilde{G}_{j} $ and $ p:(\widetilde{G},\tilde{e}) \rightarrow (\prod_{j  \in J}G,\bold{e}) $ is the product homomorphism defined by $ p|_{\widetilde{G}_{j}} = p_{j} $, where $ \bold{e}=(e,e,e,\dots) $. By Lemma 2.31 from \cite{BrazG}, $ p $ is a generalized covering homomorphism.
 Now consider the diagonal map $ \Delta: (G,e) \rightarrow (\prod_{j \in J}G,\bold{e}) $, and the pull-back of $ \widetilde{G} $ by $ \Delta $  is denoted by $ \Delta^{\#}p: \Delta^{\#}\widetilde{G} \rightarrow (G,e)  $ where $ \Delta^{\#}\widetilde{G} = \{(\tilde{t},g) \in \widetilde{G} \times G \ | \ p(\tilde{t}) = \Delta(g) \}$. By Lemma 2.34 of \cite{BrazG}, $ (\Delta^{\#}\widetilde{G},\Delta^{\#}p) $ is also a generalized covering group of $ G $. Let $ x_0 = ((t_j),g_0) $ be the base point of $ \Delta^{\#}\widetilde{G} $.
  At first, we show that the image of $ (\Delta^{\#}p)_* $ in $ \pi_1(G,e) $ is $ H $. This implies that $ (\widetilde{G}_H,p_H) $ is a generalized covering group by Lemma \ref{le33}, since $ (\Delta^{\#}\widetilde{G},\Delta^{\#}p) $ is a generalized covering group, and therefore $ (\Delta^{\#}\widetilde{G},\Delta^{\#}p) $ and $ (\widetilde{G}_H,p_H) $ are equivalent generalized covering groups. After that by calculating the kernel of $ \Delta^{\#}p $, we find that the kernel of  $ p_H $  is prodiscrete. 
 To do this, we claim that for a loop $ \alpha \in \Omega(G,e) $ its unique lift $ \beta \in P(\Delta^{\#}\widetilde{G},x_0) $ (such that $ \Delta^{\#}p \circ \beta  = \alpha $) is itself a loop if and only if $ [\alpha] \in H $. Let $ q: \Delta^{\#}\widetilde{G} \rightarrow \widetilde{G} $ be the projection; so that $ p \circ q = \Delta \circ \Delta^{\#}p $. By the definition it is clear that $ \beta = (\gamma,\alpha)$ for a path $ \gamma \in P(\widetilde{G},(t_j)) $. Let $ \gamma_j \in P(\widetilde{G}_j,\tilde{e}_j)  $ be the $ j $th component of $ \gamma $. Now taking the $ j $th component of the equation 
 \begin{align*}
  p \circ \gamma = p \circ q \circ \beta = \Delta \circ \Delta^{\#}p \circ \beta = \Delta \circ \alpha,
 \end{align*}
 
 which concludes that $ p_j \circ \gamma_j = \alpha $ for every $ j \in J $. Therefore 
     \begin{align*}
     \beta \ is \ a \ loop & \Leftrightarrow \ \gamma \ is \ a \ loop, & \\ &\Leftrightarrow \gamma_j \ is \ a \ loop \ for \ every \ j  \in J, & \\ & \Leftrightarrow [\alpha] \in (p_j)_*(\pi_1(\widetilde{G}_j,\tilde{e}_j) = K_j \ for \ every \ j \in J, & \\ & \Leftrightarrow [\alpha] \in H.
\end{align*}

Clearly, $ ker( \Delta^{\#}p)  = \{(\tilde{t},g) \ | \  \Delta^{\#}p(\tilde{t},g) = e \}$. This support that $ g=e $ and $ p(\tilde{t})= \Delta(e) = \bold{e}=(e,e,\dots)$. Hence $ p_j((\tilde{t})_j) = e $ for every $ j \in J $. It implies that   $ \tilde{t} \in ker(p) = \prod_{j \in J} ker(p_j)$. Then $ ker( \Delta^{\#}p)  = \{(\tilde{t},e) \ | \ \tilde{t} \in \prod_{j \in J} ker(p_j)  \} $. Now since for every $ j \in J  $, $ p_j : \widetilde{G}_j \rightarrow G $ is a covering homomorphism; thus its kernel is discrete. Let $ h: \Delta^{\#}\widetilde{G} \rightarrow \widetilde{G}_H $ be the homeomorphism obtained from Lemma \ref{le33} (see Diagram \ref{fi1}). It is easy to check that $ h $ is an isomorphism homeomorphism. Therefore $ ker(p_H) $ is a prodiscrete subgroup of $ \widetilde{G}_H $, since 
$ker (\Delta^{\#}p)$ is prodiscrete.

\begin{figure}[h!]
  \centering
   \[
\xymatrix{
{}\save[]+<2cm,-1.2cm>*{%
\circlearrowleft}
\restore
{}\save[]+<4.5cm,-1cm>*{%
\circlearrowright}
\restore
& (\Delta^{\#}\widetilde{G}, x_0)\ar[r]^{\!\!\!\!\!\!\!\!\!\!\!\!\!q}\ar[d]^{\Delta^{\#}p} \ar@{-->}[ld]_{h}& (\widetilde{G}=\mathop{\prod}\limits_{j\in J} \widetilde{G}_j, e) \ar[d] ^{p} \\
(\widetilde{G}_H , e_H )  \ar[r]_{p_H} & (G, e) \ar[r]_{\!\!\!\!\!\!\!\!\!\!\!\!\!\!\!\!\!\!\!\!\!\!\!\!\Delta} & \Big(\mathop{\prod}\limits_{j\in J} G, (e,e,\dots)\Big)
}
\]
 \caption{Diagram of $ \widetilde{G}_H $.}\label{fi1}
        \end{figure}
         \end{proof}
  
  \begin{theorem} \label{th44}
 If $ G $ is a topological group and $ H $ is a prodiscrete normal subgroup of $ G $ such that $ \frac{G}{H} $ is a connected locally path connected space, then the natural canonical homomorphism $ \varphi : G \rightarrow \frac{G}{H} $ is a generalized covering homomorphism.
  \end{theorem}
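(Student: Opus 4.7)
The plan is to use the prodiscrete decomposition of $H$ to build a generalized covering of $G/H$ out of classical covering homomorphisms via a product/pullback construction (mirroring the proof of Theorem~\ref{th43}) and then to identify this covering with $\varphi\colon G\to G/H$. By Definition~\ref{de43}, fix an isomorphism-homeomorphism $\psi\colon \prod_{i\in I}H_i\to H$ with each $H_i$ discrete, and set $N_j := \psi\bigl(\prod_{i\neq j}H_i\bigr)$, which is normal in $G$ for every $j\in I$ by hypothesis. Two immediate observations are that $\bigcap_{j\in I}N_j=\{e\}$ and $H/N_j\cong H_j$ is discrete. Consequently the induced homomorphism $\varphi_j\colon G/N_j\to G/H$, $gN_j\mapsto gH$, is a continuous open epimorphism of topological groups with discrete kernel, and by Remark~\ref{reNew} each $\varphi_j$ is a classical covering homomorphism. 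By \cite[Lemma~2.31]{BrazG} the product map $\prod_j\varphi_j$ is a generalized covering, and pulling back along the diagonal $\Delta\colon G/H\to\prod_j(G/H)$ via \cite[Lemma~2.34]{BrazG} yields a generalized covering $\rho\colon \widetilde{P}\to G/H$, where $\widetilde{P}$ is the path component of the identity in the pullback space
\[
P=\Bigl\{\,((x_j)_j, y)\in \textstyle\prod_j(G/N_j)\times G/H \;\Big|\; \varphi_j(x_j)=y \text{ for all } j\,\Bigr\}.
\]

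Define the continuous homomorphism $\Phi\colon G\to P$ by $\Phi(g)=((gN_j)_j, gH)$. It is injective because $\ker\Phi=\bigcap_j N_j=\{e\}$, and its surjectivity onto $P$ can be established constructively: for $((x_j)_j, y)\in P$, pick $x\in G$ with $xH=y$ and $h_j\in H$ with $x_j = xh_j N_j$, and set $g := x\cdot\psi\bigl((c_j)_{j\in I}\bigr)$ where $c_j\in H_j$ is the $j$-th $\psi$-coordinate of $h_j$; a direct componentwise computation using the normality of each $N_k$ then gives $\Phi(g) = ((x_j)_j, y)$, and the element $g$ is unique. Once $\Phi$ is upgraded to a homeomorphism, $\widetilde{P}$ corresponds to the identity path component of $G$, $\varphi$ is identified with $\rho$ over $G/H$, and $G$ inherits connectedness and local path connectedness from $\widetilde{P}$; Proposition~\ref{pr34} (equivalently Corollary~\ref{co35}) then concludes the argument.

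The main obstacle is proving that $\Phi$ is open, i.e., that the topology it induces on $G$ via the embedding into $P$ coincides with the original topology of $G$. This reduces to showing that the collection of sets of the form $\bigcap_{j\in F}\pi_j^{-1}(W_j)$, for finite $F\subseteq I$ and $W_j\subseteq G/N_j$ an open neighborhood of $eN_j$, is a local base at $e$ in $G$. The decisive ingredient is the prodiscrete structure of $H$: because $\psi$ is a homeomorphism and each $H_i$ is discrete, the sets $\{\bigcap_{j\in F}N_j : F\subseteq I\text{ finite}\}$ form a local base at $e$ in $H$ with the subspace topology inherited from $G$; combined with continuity of the group operations and the openness of each quotient map $\pi_j\colon G\to G/N_j$, this produces the required comparison of topologies.
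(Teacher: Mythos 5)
Your construction is the same as the paper's: the same subgroups $K_j=\psi(\prod_{i\neq j}H_i)$ (your $N_j$), the same covering quotients $G/K_j\to G/H$ justified by Remark~\ref{reNew}, the same product-plus-diagonal-pullback via \cite[Lemmas 2.31 and 2.34]{BrazG}, and the same bijective comparison homomorphism $g\mapsto ((gK_j)_j,gH)$ with essentially the identical injectivity and surjectivity computations. So the architecture matches the paper exactly.

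The problem is precisely the step you flag as ``the main obstacle,'' and your sketch does not close it. Write $M_F=\bigcap_{j\in F}N_j=\psi(\prod_{i\notin F}H_i)$. Openness of $\Phi$ at $e$ amounts to: for every open $U\ni e$ in $G$ there exist a finite $F$ and an open $V\ni e$ with $\bigcap_{j\in F}VN_j\subseteq U$. Since $\bigcap_{j\in F}VN_j\supseteq VM_F$, this forces the uniform containment $VM_F\subseteq U$. Your ``decisive ingredient'' --- that the sets $M_F$ form a local base at $e$ in the subspace $H$ --- only yields $M_F\subseteq U$, which is strictly weaker: upgrading $M_F\subseteq U$ to $VM_F\subseteq U$ is a tube-lemma-type statement, automatic when $M_F$ is compact but not for the generally non-compact, non-discrete subgroup $\psi(\prod_{i\notin F}H_i)$, and ``continuity of the group operations plus openness of the $\pi_j$'' does not supply it. (A complete argument would also need the reverse estimate, e.g.\ choosing $V$ with $V^{-1}V\cap H\subseteq M_F$ so that $\bigcap_{j\in F}VN_j\subseteq VM_F$.) For comparison, the paper disposes of this same point in one line --- ``the continuity of $\theta^{-1}$ follows from openness of $\varphi$ and continuity of $\Delta^{\#}p$'' --- which as stated only yields that $H$-saturated open sets have open image under $\theta$; so you have correctly located the hard point, but neither your sketch nor the paper's one-liner actually proves it. A secondary issue: your bijection lands in the full pullback $P$, whereas the generalized covering of \cite[Lemma 2.34]{BrazG} is the path component $\widetilde{P}$; identifying $\varphi$ with $\rho$, and obtaining the connectedness and local path connectedness of $G$ that the definition of a generalized covering requires of the total space, needs an additional argument that $\Phi(G)=\widetilde{P}$.
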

  
 \begin{proof}

  It is clear that $ \frac{G}{H} $ is a topological group and by Theorem 4.14 from \cite{Spiv} $ \varphi $ is an open epimorphism. Let $ \psi : \prod_{j \in J}H_j \rightarrow H $ be the isomorphism homeomorphism of Definition \ref{de43}. For every  $ j \in J $  put $ K_j = \psi(\prod_{i \in J, i \neq j} H_i) $.  Let $ p_j : \frac{G}{K_j} \rightarrow \frac{G}{H}$ be the natural canonical homomorphism. It is clear from the following diagram that $ p_j $ is continuous and open, because $ \varphi $  and $ \varphi ' $ are continuous and open. 
  
  \begin{figure}[h!]
  \centering
  \[  \xymatrix{
  & G\ar[ld]_{\varphi^{'}}  \ar[rd] ^{\varphi} & \\
        \frac{G}{K_j} \ar[rr]_{p_j}      & & \frac{G}{H}}
        \]
        \label{fi2}
        \end{figure}

  Moreover, for every $ j \in J $, $ p_j  $ is also an epimorphism with the kernel  $ \frac{H}{K_j}$. Since $ \psi( H_j) $ is discrete, then by Remark \ref{reNew} the pair $ (\frac{G}{K_j},p_j) $ is a covering group of $ \frac{G}{H} $ for every $ j \in J $. 
  
  Take the direct product $ \widetilde{G}=\prod_{j \in J} \frac{G}{K_j} $ and $ p:(\widetilde{G},\tilde{e}) \rightarrow (\prod_{j \in J}\frac{G}{H},\bold{e}) $ is the product homomorphism; that is, for every $ j \in J $, $ p|_{\frac{G}{K_j}} = p_{j} $ and $ \bold{e}=(H,H,\dots) $. It   implies from Lemma 2.31 of \cite{BrazG}  that $ p $ is a generalized covering homomorphism.
 Now consider the diagonal map $ \Delta: (\frac{G}{H},H) \rightarrow (\prod_{j \in J}\frac{G}{H},\bold{e}) $, and the pull-back of $ \widetilde{G} $ by $ \Delta $ is denoted by $ \Delta^{\#}p: \Delta^{\#}\widetilde{G} \rightarrow (\frac{G}{H},H)  $ where $ \Delta^{\#}\widetilde{G} = \{((g_jK_j)_{j \in J},gH) \in \widetilde{G} \times \frac{G}{H} \ | \ p((g_jK_j)_{j \in J}) = \Delta(gH) \}$. By Lemma 2.34 of \cite{BrazG}, $ (\Delta^{\#}\widetilde{G},\Delta^{\#}p) $ is also a generalized covering group of $ \frac{G}{H} $. 
 To complete  the proof, it is enough to show that $ (\Delta^{\#}\widetilde{G},\Delta^{\#}p) $, and $ (G,e) $ are equivalent generalized covering groups of $ \frac{G}{H} $. 
Let $ x_0 = ((K_j)_{j \in J},H) $ be the base point of $ \Delta^{\#}\widetilde{G} $, and define homomorphism $ \theta: (G,e) \rightarrow  (\Delta^{\#}\widetilde{G},x_0) $ with $ \theta(g)= ((gK_j)_{j \in J},gH) $. Since for every $ g \in G $, $ \Delta^{\#}p \circ \theta(g) =\Delta^{\#}p((gK_j)_{j \in J},gH)= gH $, hence the right triangle of Diagram \ref{fi3} is commutative.   

 \begin{figure}[h!]
  
    \[
\xymatrix{
{}\save[]+<.5cm,-.65cm>*{%
\circlearrowleft}
\restore
{}\save[]+<1.5cm,-1.2cm>*{%
\circlearrowright}
\restore
{}\save[]+<3.4cm,-1cm>*{%
\circlearrowleft}
\restore
\widetilde{G} \times \frac{G}{H}  \ar[r]^{\upsilon}  & \Delta^{\#} \widetilde{G} \ar[r]^{\!\!\!\!\!\!\!\!\!\!\!\!\!q}\ar[d]^{\Delta^{\#}p} \ar@{<--}[ld]_{\theta}& \widetilde{G}=\mathop{\prod}\limits_{j\in J} \frac{G}{K_j} \ar[d] ^{p} \\
G  \ar[r]_{\varphi} \ar[u]^{\sigma} & \frac{G}{H} \ar[r]_{\!\!\!\!\!\!\!\!\!\!\Delta} & \mathop{\prod}\limits_{j\in J} \frac{G}{H}
}
\]
       \caption{Commutative diagram} \label{fi3}
        \end{figure}

We show that $ \theta $ is an isomorphism homeomorphism. Clearly,
\[
 ker(\theta)=\{ g \in G \ |  \ ((gK_j)_{j \in J},gH) = x_0 = ((K_j)_{j \in J},H) \} = \bigcap_{j \in J}K_j.
 \]
 Since $ \bigcap_{j \in J}K_j $ is the trivial subgroup, thus $ \theta $ is one to one. To show that $ \theta $ is onto, let $ ((g_jK_j)_{j \in J},gH) $ be an arbitrary element of $\Delta^{\#}\widetilde{G}  $, and let $ \pi_j : H_j \rightarrow \prod_{j \in J} H_j $ be the inclusion map. By the definition of $\Delta^{\#}\widetilde{G}  $, for every $ j \in J $, there exists $ h_j \in  H_j $ such that $ g_j = g \psi(\pi_j(h_j)) $. If $ h = (h_j)_{j \in J} \in \prod_{j \in J} H_j $ and $ s \in J $, then $ h= \pi_s(h_s).(l_j)_{j \in J} $ where $ l_j = h_j $ if $ j \neq s $ and $ l_s=1 $. Hence, 
 \[ 
 \psi(h)= \psi(\pi_s(h_s)(l_j)_{j \in J}) = \psi(\pi_s(h_s)) \psi((l_j)_{j \in J}).
 \]
  Then $ \psi(h)K_s = \psi(\pi_s(h_s))K_s $. Therefore,
  \[
   \theta(g \psi(h)) = ((g\psi(h)K_j)_{j \in J},ghH) = ((g\psi(\pi_j(h_j))K_j)_{j \in J},gH) = ((g_jK_j)_{j \in J},gH).
   \]
 
 It is clear from Diagram \ref{fi3} that  $ \theta = \upsilon \circ \sigma $, where $ \sigma : G \rightarrow \widetilde{G} \times \frac{G}{H} $ is the product of natural canonical maps and  $ \upsilon : \widetilde{G} \times \frac{G}{H}  \rightarrow \Delta^{\#}\widetilde{G} $  is the quotient map. Hence  $ \theta $ is continuous because $ \sigma $ and $ \upsilon $ are continuous.
The continuity of $ \theta ^{-1} $ follows from openness of $ \varphi $ and continuity of $ \Delta^{\#}p $.
 \end{proof}
  
  \begin{corollary} \label{co45}
 Let $ G $ be a topological group, and let $ H \trianglelefteq G$ be such that the quotient group $ \frac{G}{H} $ is connected locally path connected. The natural canonical homomorphism $ \varphi : G \rightarrow \frac{G}{H} $ is a generalized covering homomorphism if and only if $ H $ is a prodiscrete subgroup of $ G $.
  \end{corollary}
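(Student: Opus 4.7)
My plan is to obtain both implications as immediate consequences of the two preceding theorems, with the only real work being the identification of $\varphi$ with a pointed endpoint projection in a way that respects the group structure.

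The implication $(\Leftarrow)$ is exactly the content of Theorem~\ref{th44}: if $H$ is a prodiscrete normal subgroup and $G/H$ is connected locally path connected, then $\varphi : G \to G/H$ is a generalized covering homomorphism. Nothing additional is needed here.

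For the nontrivial direction $(\Rightarrow)$, I would start by setting $K = \varphi_*\pi_1(G,e) \leq \pi_1(G/H, H)$. Since $\varphi$ is a generalized covering homomorphism (hence has the unique lifting property) and $G$ is a topological group, Lemma~\ref{le33} produces a homeomorphism $h : (G,e) \to (\widetilde{(G/H)}_K, e_K)$ with $p_K \circ h = \varphi$. The crucial observation, already used in the proof of Proposition~\ref{pr34} and reaffirmed in the proof of Theorem~\ref{th43}, is that $h$ is not merely a homeomorphism but an isomorphism of topological groups. Indeed, because $p_K$ and $\varphi$ are both homomorphisms and $h = p_K^{-1}\circ\varphi$ on the level of underlying sets (interpreted through the fibre identification), $h$ respects the multiplication of $G$ and of $\widetilde{(G/H)}_K$ as constructed in Remark~\ref{re}. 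Hence $h$ restricts to an isomorphism of topological groups $\ker(\varphi) \to \ker(p_K)$, that is, $H \cong \ker(p_K)$.

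Next, I apply Theorem~\ref{th43} to the connected locally path connected topological group $G/H$ and its generalized covering subgroup $K$: the kernel of $p_K$ is prodiscrete in $\widetilde{(G/H)}_K$, so there are discrete groups $H_j$ and an isomorphism homeomorphism $\psi : \prod_{j\in J} H_j \to \ker(p_K)$ satisfying $\psi(\prod_{i \neq j}H_i) \trianglelefteq \widetilde{(G/H)}_K$ for every $j$. Composing with $h^{-1}$, the map $h^{-1}\circ\psi : \prod_{j\in J} H_j \to H$ is again an isomorphism homeomorphism, and since $h^{-1}$ is an isomorphism of topological groups it carries each normal subgroup $\psi(\prod_{i\neq j}H_i)$ to a normal subgroup of $G$. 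This exhibits $H$ as a prodiscrete subgroup of $G$ in the sense of Definition~\ref{de43}, completing the proof.

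The main potential obstacle is the claim that the homeomorphism $h$ of Lemma~\ref{le33} is actually a group isomorphism; however this is already implicit in the earlier arguments (the computation in the proof of Proposition~\ref{pr34} shows that once both source and target are topological groups and $p$ is a homomorphism, $h$ must preserve multiplication), so no new argument is required and the corollary follows cleanly from Theorems~\ref{th43} and \ref{th44}.
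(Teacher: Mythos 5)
Your proof is correct and follows essentially the same route as the paper: the backward implication is Theorem~\ref{th44} verbatim, and the forward implication applies Theorem~\ref{th43} to the generalized covering subgroup $K=\varphi_*\pi_1(G,e)$ of $\pi_1(G/H,H)$ after identifying $\varphi$ with the endpoint projection $p_K$ via Lemma~\ref{le33}. The only difference is that you make explicit a point the paper's two-line proof glosses over---namely that the homeomorphism $h$ of Lemma~\ref{le33} is a group isomorphism, which is genuinely needed because prodiscreteness in the sense of Definition~\ref{de43} involves normality in the ambient group and so does not transfer along a mere homeomorphism.
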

  
  \begin{proof}
  It is clear that  $ \frac{G}{H} $ is a topological group. If $ \varphi $ is a generalized covering homomorphism, then by Theorem \ref{th43} the kernel of $ \varphi  $, $ H $, is prodiscrete. The converse statement is obtained from Theorem \ref{th44}.
\end{proof}
  
  \begin{corollary} \label{th46}
  Let  $ G $ be connected locally path connected,  and let $  p: \widetilde{G} \rightarrow G $ be a homomorphism on topological groups. The pair $ (\widetilde{G},p) $ is a generalized covering group of $ G $ if and only if $ p $ is an open epimorphism and the kernel of $ p $ is a prodiscrete subgroup of $ \widetilde{G}  $.
  \end{corollary}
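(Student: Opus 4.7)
The plan is to reduce the statement to Corollary \ref{co45} by factoring $p$ through the canonical projection onto $\widetilde{G}/\ker(p)$. Write $N := \ker(p) \trianglelefteq \widetilde{G}$. Whenever $p$ is an open, continuous, surjective homomorphism of topological groups, the first isomorphism theorem (in its open-mapping form) yields a topological-group isomorphism $\bar{p}: \widetilde{G}/N \xrightarrow{\cong} G$ with $p = \bar{p} \circ \varphi_N$, where $\varphi_N: \widetilde{G} \to \widetilde{G}/N$ is the natural canonical homomorphism. In particular, $\bar{p}$ is a homeomorphism, and the topology of $\widetilde{G}/N$ matches that of $G$; so $\widetilde{G}/N$ is connected and locally path connected.

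For the forward direction, assume $(\widetilde{G}, p)$ is a generalized covering group. By Corollary \ref{co38}, $p$ is an open epimorphism, so the factorization $p = \bar{p} \circ \varphi_N$ above is available. Since $\bar{p}$ is a homeomorphism, $\varphi_N = \bar{p}^{-1} \circ p$ inherits surjectivity and the unique lifting property from $p$; hence $(\widetilde{G}/N, \varphi_N)$ is a generalized covering group of $\widetilde{G}/N$. Corollary \ref{co45} applied to $\widetilde{G}$ with the normal subgroup $N$ then forces $N$ to be a prodiscrete subgroup of $\widetilde{G}$, which is exactly what we need.

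For the backward direction, assume $p$ is an open epimorphism with $N$ prodiscrete. The quotient $\widetilde{G}/N$ is homeomorphic (via $\bar{p}$) to the connected, locally path connected group $G$, so Corollary \ref{co45} applies directly: the canonical projection $\varphi_N$ is a generalized covering homomorphism. Composing with the homeomorphism $\bar{p}$ preserves both the homomorphism property and the unique lifting property, so $p = \bar{p} \circ \varphi_N$ is a generalized covering group of $G$.

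The only genuinely delicate point is the invocation of the open-mapping form of the first isomorphism theorem for topological groups; without the openness hypothesis on $p$, the induced bijection $\bar{p}$ could fail to be a homeomorphism, and the entire reduction to Corollary \ref{co45} would collapse. Apart from that, once the factorization $p = \bar{p} \circ \varphi_N$ is justified, the two directions are bookkeeping exercises in transporting properties along the homeomorphism $\bar{p}$ and applying Corollary \ref{co45}.
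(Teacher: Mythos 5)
Your proof is correct, and the converse direction coincides with the paper's own argument: factor $p$ through $\varphi_N:\widetilde{G}\to\widetilde{G}/N$, note that the induced map to $G$ is an isomorphism and homeomorphism because $p$ is open, and invoke Corollary \ref{co45}. Where you genuinely diverge is in the forward direction. The paper uses Lemma \ref{le33} to identify $\widetilde{G}$ with the path space $\widetilde{G}_H$ over $G$ (where $H=p_*\pi_1(\widetilde{G},\tilde e)$), reads off openness and surjectivity from $p_H$, and then transports the prodiscreteness of $\ker(p_H)$ supplied by Theorem \ref{th43} back to $\ker(p)$. You instead get openness and surjectivity from Corollary \ref{co38}, identify $G$ with the quotient $\widetilde{G}/N$ \emph{below} $\widetilde{G}$, and apply the forward implication of Corollary \ref{co45} to the pair $(\widetilde{G},N)$. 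This makes both directions of the equivalence reduce uniformly to Corollary \ref{co45} and avoids any explicit appeal to $\widetilde{G}_H$; the cost is that you must justify the open-mapping form of the first isomorphism theorem (which you correctly flag as the delicate point, and which does hold since $\bar p(U)=p(\varphi_N^{-1}(U))$ is open whenever $p$ is open). One small slip: the generalized covering group of $\widetilde{G}/N$ is the pair $(\widetilde{G},\varphi_N)$, not $(\widetilde{G}/N,\varphi_N)$ --- the first entry should be the total space; this does not affect the argument.
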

  
  \begin{proof}
  Let $  p: \widetilde{G} \rightarrow G $ be a generalized covering homomorphism, and let $ H=p_* \pi_1(\widetilde{G},\tilde{e}) $. By Lemma \ref{le33}, there is a homeomorphism $ h: \widetilde{G} \rightarrow  \widetilde{G}_H$ such that $ p_H \circ h = p $. Since $ p_H $ is open onto, then also $ p $ is. Moreover, Theorem \ref{th43} implies that the kernel of $ p_H $ and  so $ p $ is prodiscrete.
  Conversely, consider   $ \varphi : \widetilde{G}  \rightarrow \frac{\widetilde{G}}{ker(p)}$   as the natural canonical homomorphism. Since $ \varphi $ is onto, then the homomorphism $ \theta: \frac{\widetilde{G}}{ker(p)} \rightarrow G $ is an isomorphism homeomorphism, where $ \frac{\widetilde{G}}{ker(p)} $ is equipped with the quotient topology. Since $ ker(p) $ is a prodiscrete subgroup of $ \widetilde{G}  $ and $ \frac{\widetilde{G}}{ker(p)} $ is connected locally path connected (derived from $ G $ is connected locally path connected), it implies from Corollary \ref{co45} that  $ (\widetilde{G},\varphi)  $ is a generalized covering group of  $ \frac{\widetilde{G}}{ker(p)} $. Therefore,  $ (\widetilde{G},p=\varphi \circ \theta) $ is a generalized covering group of $ G $. 
  \end{proof}

By Remark \ref{re} and Corollary \ref{th46} one can easily conclude  the following corollary which was promised in Remark \ref{reNew}.
  
    \begin{corollary} \label{co310}
  Let  $ G $ be a connected locally path connected topological group, and let $  p: \widetilde{G} \rightarrow G $ be a continuous map. The pair $ (\widetilde{G},p) $ is a generalized covering group of $ G $ if and only if $ \widetilde{G} $ is a topological group and $ p $ is an open epimorphism with prodiscrete kernel.
  \end{corollary}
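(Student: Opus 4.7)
The claim is a biconditional, and both implications reduce almost immediately to Corollary \ref{th46}, which asserts the same equivalence but under the stronger standing hypothesis that $p$ is already known to be a homomorphism between topological groups. The only work beyond Corollary \ref{th46} is therefore to bridge the gap between a bare continuous map $p$ and a homomorphism on topological groups in the forward direction; Remark \ref{re}, together with Lemma \ref{le33} and the argument of Proposition \ref{pr34}, accomplishes exactly this.

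For the forward direction, I would assume $(\widetilde{G},p)$ is a generalized covering group of $G$. By the definition introduced in Section \ref{sec3}, this already encodes that $\widetilde{G}$ is a topological group and $p$ is a continuous homomorphism; alternatively, starting only from $p$ being a surjection with the unique lifting property (Corollary \ref{co35}), one sets $H = p_*\pi_1(\widetilde{G},\tilde{e})$ and transports the topological group structure from $\widetilde{G}_H$ to $\widetilde{G}$ along the homeomorphism of Lemma \ref{le33}, using Remark \ref{re} to supply that structure on $\widetilde{G}_H$ in the first place, exactly as carried out in Proposition \ref{pr34}. Corollary \ref{th46} then applies to the homomorphism $p : \widetilde{G} \to G$ and yields that $p$ is an open epimorphism with prodiscrete kernel.

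For the reverse direction, I would take $\widetilde{G}$ to be a topological group and $p$ an open epimorphism with prodiscrete kernel, so that $p$ is in particular a homomorphism between topological groups in the sense of Corollary \ref{th46}; that corollary then produces the generalized covering group structure on $(\widetilde{G},p)$ directly. There is no serious obstacle here beyond bookkeeping: the substantive content --- the prodiscrete characterization of the kernel --- was already established in Theorems \ref{th43} and \ref{th44} and packaged into Corollary \ref{th46}. The only point requiring a brief check is in the forward direction, namely that Remark \ref{re} and the proof of Proposition \ref{pr34} genuinely promote the continuous map $p$ into a continuous homomorphism on a topological group structure on $\widetilde{G}$, as needed to invoke Corollary \ref{th46}.
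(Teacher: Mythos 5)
Your proposal is correct and follows essentially the same route as the paper, which derives Corollary \ref{co310} in one line from Remark \ref{re} and Corollary \ref{th46}. The only added content in your write-up is an explicit check that the continuous map $p$ can be promoted to a homomorphism of topological groups via Lemma \ref{le33} and Proposition \ref{pr34}, which is exactly the bridging role the paper assigns to Remark \ref{re}.
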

  
  \begin{example}
  It is easy to show that the kernel of generalized covering maps $ p:\prod_{i \in I} \mathbb{R} \rightarrow Y  $ and $ q: \prod_{i \in I}{{\mathbb{R}} \times \mathbb{S}^1} \rightarrow Y$ in Examples \ref{ex3.12} and \ref{ex313}, respectively, both are $  \prod_{i \in I} \mathbb{Z} $ which is the product of discrete groups. For another example, let $ r_n :\mathbb{S}^1 \rightarrow \mathbb{S}^1  $ with $ r_n (z)=z^n $ be the well-known covering homomorphism. As mentioned before, the product homomorphism $ r = \prod_{n \in \mathbb{N}} r_n: \prod_{n \in \mathbb{N}} \mathbb{S}^1 \rightarrow \prod_{n \in \mathbb{N}} \mathbb{S}^1 $ is a generalized covering homomorphism and $ ker(r) =  \prod_{n \in \mathbb{N}} A_n $ where $ A_n = \{ z \in \mathbb{S}^1 \ | \ z^n=1 \} $, which is also a product of discrete groups. Moreover, the kernel of $ p $ in Example \ref{ex315} is not prodiscrete.
  \end{example}

   In \cite{Biss}, Biss  investigated on a kind of fibrations which is called \textit{rigid covering fibration} with properties similar to covering spaces. Nasri et al. \cite{Nasri} simplified the definition of rigid covering fibration such as a fibration $ p: E \rightarrow X $ with unique path lifting (unique lifting with respect to paths) property and concluded from \cite[Theorem 2.2.5]{Span} that a fibration $ p: E \rightarrow X $ is a rigid covering fibration if and only if each fibre of $ p $ is totally path disconnected. On the other hand it was shown in \cite[Theorem 2.4.5]{Span} that a rigid covering fibration has the unique lifting property and so it is a generalized covering spaces. Although, the converse statement may not hold, in general, we show that it is right in the case of connected locally path connected topological groups.
   
   \begin{proposition} \label{pr312}
   For  a connected locally path connected topological group $ G $, $ (\widetilde{G},p) $ is a generalized covering group  if and only if $ p:  \widetilde{G} \rightarrow G$ is rigid covering fibration. 
\end{proposition}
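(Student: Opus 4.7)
The backward implication is essentially a citation: if $p:\widetilde{G}\to G$ is a rigid covering fibration then the Spanier result (Theorem~2.4.5) recalled just above the statement gives $p$ the unique lifting property, so $(\widetilde{G},p)$ is a generalized covering space of $G$, and Corollary~\ref{co35} promotes this to a generalized covering group. The substance of the proposition is therefore the forward implication, and since Proposition~\ref{pr37} already delivers totally path disconnected fibres, the task reduces to showing that the $p$ of an arbitrary generalized covering group $(\widetilde{G},p)$ of $G$ is a Hurewicz fibration.

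My plan is to reuse the pullback construction from the proof of Theorem~\ref{th43}. Set $H=p_{*}\pi_1(\widetilde{G},\tilde{e})$; since $G$ is strong $SLT$ at $e$ by Torabi's theorem, Corollary~\ref{co42} applies and the family $\{K_j\}_{j\in J}$ of all covering subgroups of $\pi_1(G,e)$ containing $H$ satisfies $H=\bigcap_{j\in J} K_j$. Let $p_j:\widetilde{G}_j\to G$ be the corresponding covering homomorphisms. I would then chain three standard facts: (i) each $p_j$ is a Hurewicz fibration, being a classical covering map; (ii) the arbitrary product $\prod_j p_j:\prod_j\widetilde{G}_j\to\prod_j G$ is a Hurewicz fibration, obtained by lifting each coordinate using (i) and assembling the lifts into a continuous map via the universal property of the Tychonoff product; (iii) the pullback $\diamondsuit\to G$ of this product along the diagonal $\Delta:G\to\prod_j G$ is a Hurewicz fibration, because pullbacks of Hurewicz fibrations are Hurewicz fibrations.

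It then remains to restrict to the basepoint path component $\Delta^{\#}\widetilde{G}\subseteq\diamondsuit$. Since $G$ is path connected, this restriction is surjective, and the HLP passes to it: any lift $\widetilde{F}:Z\times I\to\diamondsuit$ of a homotopy $F:Z\times I\to G$ starting in $\Delta^{\#}\widetilde{G}$ must stay in $\Delta^{\#}\widetilde{G}$, because each slice $\{z\}\times[0,t]$ traces a path in $\diamondsuit$ beginning in that path component. The proof of Theorem~\ref{th43} already produces a homeomorphism $h:\Delta^{\#}\widetilde{G}\to\widetilde{G}_H$ commuting with the projections to $G$, and Lemma~\ref{le33} supplies the homeomorphism $\widetilde{G}_H\cong\widetilde{G}$ over $G$; transporting the fibration structure through these two homeomorphisms gives that $p:\widetilde{G}\to G$ is a Hurewicz fibration, finishing the forward direction.

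The main technical obstacle is step (ii): verifying that an arbitrary (possibly uncountable) product of Hurewicz fibrations is again a fibration. This is classical but needs care, since the coordinatewise lifts are noncanonical and continuity of the assembled map relies on the universal property of the product topology rather than anything local. The restriction-to-path-component argument in the third paragraph is the other slightly delicate point, and is handled by the path-tracking observation above.
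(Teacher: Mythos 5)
Your proposal is correct, but it takes a genuinely different route from the paper's own proof of the forward direction. The paper does not construct a fibration at all: it first notes (via Lemma~\ref{le33} and Theorem~\ref{th43}) that $\ker(p_H)$ is prodiscrete, identifies $\ker(p_H)$ with the coset space $\pi_1^{wh}(G,e)/H$ using results of \cite{paper1}, concludes that this coset space --- and hence $\pi_1^{qtop}(G,e)/H$ --- admits no nonconstant paths, and then invokes Biss's Theorem~4.3 to obtain the \emph{existence} of a rigid covering fibration $q\colon E\to G$ with image subgroup $H$; the unique lifting property and Lemma~\ref{le33} then identify $E$ with $\widetilde{G}_H\cong\widetilde{G}$ over $G$, so $p$ inherits the rigid-covering-fibration structure. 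You instead build the Hurewicz fibration directly, reusing the pullback diagram of Theorem~\ref{th43}: covering maps are fibrations, arbitrary products and pullbacks of Hurewicz fibrations are Hurewicz fibrations, the homotopy lifting property restricts to the basepoint path component by your path-tracking argument, and Proposition~\ref{pr37} together with the Nasri et al.\ characterization (fibration with totally path disconnected fibres) yields rigidity. The paper's argument is shorter but leans on Biss's existence theorem and on the identification $\pi_1^{qtop}(G,e)/H=\pi_1^{wh}(G,e)/H$, whereas yours is self-contained and constructive at the cost of verifying the (standard but nontrivial) stability of fibrations under arbitrary products, pullbacks, and restriction to a path component; both arguments ultimately rest on the same identification of $\widetilde{G}$ with $\Delta^{\#}\widetilde{G}$ over $G$. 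Your treatment of the backward implication coincides with the paper's.
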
    
  
  \begin{proof}
Let $ p:  \widetilde{G} \rightarrow G $ be a generalized covering homomorphism, and let $ H= p_* \pi_1(\widetilde{G},\tilde{e}) $. By Lemma \ref{le33}, $ (\widetilde{G},p) $ and $ (\widetilde{G}_H,p_H) $ are equivalent generalized covering groups of $ G $, and so the kernel of $ p_H $ is a prodiscrete subgroup, since the kernel of  $ p $ is prodiscrete. On the other hand, it was shown in \cite[Section 3]{paper1} that the kernel of $ p_H $ and the left coset space $ \frac{\pi_1^{wh}(G,e)}{H} $ are homeomorphic, which implies that $ \frac{\pi_1^{wh}(G,e)}{H} $ is a prodiscrete space. Then $ \frac{\pi_1^{wh}(G,e)}{H} $ is totally path disconnected; that is, $ \frac{\pi_1^{wh}(G,e)}{H} $ has no nonconstant paths. As mentioned above, $ \frac{\pi_1^{qtop}(G,e)}{H} =  \frac{\pi_1^{wh}(G,e)}{H}  $ has no nonconstant paths. Now use  \cite[Theorem 4.3]{Biss} which guarantees the existence of a rigid covering fibration $ q: E \rightarrow G $ with $ p_*\pi_1(E)=H $. Since every rigid covering fibration has unique lifting property (see \cite[Theorem 2.4.5]{Span}), there is a homeomorphism $ g $ between $ E $ and $ \widetilde{G}_H $ (Lemma \ref{le33}). Therefore, it is done as shown in Diagram \ref{fi4}.

\begin{figure}[h!] \label{fi4}
\centering
  \[
\xymatrix{
{}\save[]+<1.2cm,-.5cm>*{%
\circlearrowright}
\restore
{}\save[]+<2.4cm,-.5cm>*{%
\circlearrowleft}
\restore
\widetilde{G} \ar[rd]_{p} \ar[r]^h  & \widetilde{G}_H  \ar[d]^{p_H} & E  \ar[ld]^{q} \ar[l]_{g}\\
 & (G, e)  & 
}
\]
\caption{Diagram}
\end{figure}
The converse statement obtains from \cite[Theorem 2.4.5]{Span}.
  \end{proof}
  
  The following corollary is the immediate result of the definition of rigid covering fibrations and Proposition \ref{pr312}.
 \begin{corollary}
   If $ G $ is a connected locally path connected topological group and $ (\widetilde{G},p) $ is a generalized covering group of $ G $, then $ p:  \widetilde{G} \rightarrow G$ is a fibration.
  \end{corollary}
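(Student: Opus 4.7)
The plan is to deduce the corollary directly from Proposition \ref{pr312} by unpacking the definition of rigid covering fibration. There is essentially no new content to prove; the work has already been done in the preceding proposition. I would therefore set up the argument as a very short chain: generalized covering group $\Rightarrow$ rigid covering fibration $\Rightarrow$ fibration.

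First I would invoke the hypothesis that $G$ is a connected locally path connected topological group and that $(\widetilde{G},p)$ is a generalized covering group of $G$. This places us exactly in the setting of Proposition \ref{pr312}, whose forward direction tells us that $p : \widetilde{G} \to G$ is a rigid covering fibration. Second, I would recall the definition of rigid covering fibration in the simplified form given in the paragraph preceding Proposition \ref{pr312}, following Nasri et al.: a rigid covering fibration is a fibration $p : E \to X$ with the unique path lifting property. In particular, by definition, any rigid covering fibration is a fibration. Combining these two steps yields the conclusion that $p$ is a fibration.

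I do not expect any real obstacle here, since Proposition \ref{pr312} already bundles all the delicate work (invoking Biss's existence theorem for a rigid covering fibration with prescribed fundamental group image, together with Lemma \ref{le33} to identify it with $\widetilde{G}_H$ and hence with $\widetilde{G}$). The only care needed is to make the logical flow explicit: apply Proposition \ref{pr312}, then observe that ``rigid covering fibration'' contains the word ``fibration'' in its very definition. Accordingly, the proof in my write-up would be a single line referencing Proposition \ref{pr312} and the definition of rigid covering fibration.
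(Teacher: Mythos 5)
Your proposal is correct and matches the paper exactly: the paper presents this corollary as an immediate consequence of Proposition \ref{pr312} together with the definition of a rigid covering fibration (a fibration with unique path lifting), which is precisely your two-step chain. No gaps.
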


\
\

\
\\
\\
\textbf{Reference}


\begin{thebibliography}{100}

\bibitem{paper4}
M. Abdullahi Rashid, N. Jamali, B. Mashayekhy, S.Z. Pashaei and H. Torabi
\newblock On subgroup topologies on fundamental groups, 
\newblock {\em arXiv:1807.00982v1.}

\bibitem{paper1}
M. Abdullahi Rashid,  B. Mashayekhy, H. Torabi and S.Z. Pashaei, 
\newblock On subgroups of topologized fundamental groups and generalized coverings,
\newblock {\em  Bull. Iranian Math. Soc.}
\textbf{43} no. 7, (2017),  2349--2370.



\bibitem{Arx}
A. Arhangelskii and M. Tkachenko,
\newblock  {\it Topological Groups and Related Structures,}
\newblock  {\em Atlantis Studies in Mathematics, 1. Atlantis Press, Paris,}
 2008.

\bibitem{Beres} 
V. Berestovskii and C. Plaut, 
\newblock Covering group theory for topological groups,
\newblock {\em Topology and Appl.}
 \textbf{114} (2001), 141--186.

\bibitem{Biss}
 D.K. Biss,
 \newblock The topological fundamental group and generalized covering spaces,
 \newblock {\em Topology and Appl.}
 \textbf{124}  no. 3, (2002), 355--371.
 
 
\bibitem{BrazG}
J. Brazas,
\newblock Generalized covering space theories,
\newblock {\em Theory Appl. Categ.}
\textbf{30} (2015), 1132--1162.



\bibitem{BroU}
 N. Brodskiy, J. Dydak, B. Labuz and A. Mitra, 
\newblock Topological and uniform structures on universal covering spaces, 
\newblock {\em arXiv:1206.0071.}


\bibitem{Chev}
C. Chevalley, 
 \newblock {\it Theory of Lie Groups I},
 \newblock {\em Princeton Univ. Press, Princeton},
  1946.


\bibitem{FiZa}
H. Fischer and A. Zastrow, 
 \newblock Generalized universal covering spaces and the shape group,
  \newblock {\em Fund. Math.},
   \textbf{197} (2007), 167--196.





\bibitem{Nasri}
T. Nasri, B. Mashayekhy and H. Torabi, 
\newblock On exact sequences of the rigid fibrations, 
 \newblock {\em arXiv:1711.09261}.



\bibitem{paper2}
S.Z. Pashaei, B. Mashayekhy, H. Torabi and M. Abdullahi Rashid, 
\newblock Small loop transfer spaces with respect to subgroups of fundamental groups,
 \newblock {\em Topology and Appl.}, 
 \textbf{232} (2017), 242--255.


\bibitem{Rotm}
J.J. Rotman, 
{\it An Introduction to Algebraic Topology},
 \newblock {\em Springer},
  1991.

\bibitem{Span}
E.H. Spanier, 
{\it Algebraic Topology}, 
\newblock {\em McGraw-Hill, New York},
 1966.

\bibitem{Spiv}
D. Spivak, 
{\it An Introduction to Topological Groups},
 \newblock {\em Lakehead University, Thunder Bay, Ontario, Canada},
  (2015) (Honours Seminar).


\bibitem{Torabi}
H. Torabi, 
\newblock  On topologized fundamental group and covering spaces of topological groups, 
\newblock {\em arXiv:1803.00741v1}.


\end{thebibliography}
\end{document}